\documentclass{amsart}

\usepackage{url}

\usepackage{amscd}
\usepackage{amsfonts}
\usepackage{amssymb}
\usepackage{euscript}
\usepackage{amsmath}
\usepackage{amsthm}
\usepackage[matrix, arrow, curve]{xy}
\usepackage{mathrsfs}

\usepackage{tikz}
\usepackage{tikz-cd}
\usetikzlibrary{arrows.meta}
\usepackage{enumitem}

\usepackage{appendix}

\newcommand{\var}{var}
\newcommand{\can}{can}
\newcommand{\Var}{Var}

\theoremstyle{plain}
\newtheorem{prop}{Proposition}
\newtheorem{theorem}{Theorem}
\newtheorem{lemma}{Lemma}

\newtheorem*{theorem*}{Theorem}
\newtheorem*{cor*}{Corollary}

\theoremstyle{definition}
\newtheorem{definition}{Definition}

\theoremstyle{remark}

\begin{document}

\title{Double shuffle relations via the Hodge defect}
\author{Nikita Markarian}

\begin{abstract}
We interpret the convolution constructions of \cite{MarkarianConvolution}
in Hodge-theoretic terms, extending the Hodge defect calculation
of \cite{MarkarianHodge}.
For Hodge--Tate modules on $\mathbb C^*$ smooth outside $1$,
the defect is the action of an element of the Deligne--Terasoma
transport algebra expressed through the Drinfeld associator.
For convolutions of iterated Kummer extensions, comparison of
mixed Hodge structures produces a beta factor and yields the
regularized double shuffle relations.
\end{abstract}

\email{nikita.markarian@gmail.com}

\date{}

\address{UMR 7501, Université de Strasbourg,
7 rue René Descartes,
67084 Strasbourg Cedex, France}
\maketitle

\section*{Introduction}

The regularized double shuffle relations form one of the basic
systems of relations among multiple zeta values.  They combine
the product formulas for their series and iterated-integral
representations \cite{Racinet,IKZ}.  The approach of Deligne and
Terasoma \cite{DTerICM,DeligneTerasoma}, developed in
\cite{EnriquezFurusho}, relates these identities to
multiplicative convolution of perverse sheaves.  A central object
in this approach is the transport algebra $W$, which acts on
vanishing cycles by compositions of variation, transport on
nearby cycles, and the canonical map.  Multiplicative convolution
gives rise to the harmonic coproduct on this algebra.

The paper \cite{MarkarianConvolution} develops this approach by
explicit topological constructions.  Shrinking, or semi-holonomy
along the interval, identifies vanishing cycles with the
cohomology of a suitable extension to $\mathbb P^1$.
Its compatibility with multiplicative convolution relates tensor
products of vanishing cycles to tensor products of cohomology,
and the transport algebra describes the resulting operators.
The purpose of the present paper is to explain the Hodge-theoretic
meaning of these shrinking and convolution constructions.
We equip the relevant objects with mixed Hodge structures and
calculate the periods of the topological comparison maps.

We consider mixed Hodge modules on $\mathbb C^*$ whose restriction
to $X=\mathbb C^*\setminus\{1\}$ is an iterated extension of
constant Tate variations and whose vanishing cycles at $1$ are
Hodge--Tate.  Shrinking along $(0,1]$ identifies vanishing cycles
with the cohomology of the extension by $*$ at $0$ and $!$ at
$\infty$.  Both spaces are Hodge--Tate, and shrinking respects
their weight filtrations.  The splittings supplied by the Hodge
filtrations therefore give a second isomorphism between their
complexifications.  Comparing these two isomorphisms defines
the Hodge defect, which measures the failure of shrinking
to preserve the Hodge filtration.

For iterated Kummer extensions, the Hodge defect was calculated
by convergent integrals in \cite{MarkarianHodge} and expressed
in terms of the Drinfeld associator.  We extend that calculation
to the Hodge modules described above, allowing general gluing data at
$1$.  This extension is needed for the application to convolution.
Theorem~\ref{thm:general-defect-transport} expresses the defect as
the action of a single element of the Deligne--Terasoma transport
algebra, written explicitly through the associator and a local
monodromy factor.  The transport-algebra description of
semi-holonomy in \cite{MarkarianConvolution} thus also describes
its failure to preserve the Hodge filtration.

For the convolution of two iterated Kummer extensions, we compare
the resulting mixed Hodge structure in two ways.  On cohomology,
K\"unneth gives the tensor product of the mixed Hodge structures
of the factors.  On vanishing cycles, we calculate the relative
period matrix of the topological tensor identification of
\cite{MarkarianConvolution}.  The calculation on the exceptional
component of a degeneration of the multiplication fiber gives a
beta function of the two logarithmic monodromies at $1$.
Combining this calculation with the harmonic coproduct gives
Racinet's regularized double shuffle relations for the Drinfeld
associator (Theorem~\ref{thm:double-shuffle}).  In this description,
the beta correction records the failure of the tensor
identification on vanishing cycles to preserve the Hodge
filtration.

Section~1 describes the Hodge--Tate modules under consideration
and proves weight compatibility of shrinking.  Section~2
constructs their logarithmic realization and calculates the Hodge
defect through the transport algebra.  Section~3 treats
convolution, computes the beta factor, and derives the double
shuffle relations using finite quotients of the universal
unipotent path variation.

\medskip
\noindent\textit{Acknowledgements.}
I would like to thank IHES for hospitality and excellent
working conditions.
This project was supported by the PAUSE program and the ITI IRMIA++.

\section{Hodge--Tate modules smooth outside 1}

\subsection{Hodge modules}
\label{subsec:hodge-tate-modules}

Put
\[
X=\mathbb C^*\setminus\{1\}
  =\mathbb P^1\setminus\{0,1,\infty\},
\qquad j_1:X\hookrightarrow\mathbb C^*.
\]
All Hodge structures and Hodge modules have rational coefficients.
The Tate structure $\mathbb Q(n)$ has weight $-2n$.

A graded-polarizable mixed Hodge module $\mathbb M$ on
$\mathbb C^*$ has a rational perverse sheaf $M_B$, a regular
holonomic $\mathcal D_{\mathbb C^*}$-module $\mathcal M$ with a
Hodge filtration $F$, and a comparison
\[
\operatorname{DR}(\mathcal M)\simeq M_B\otimes_{\mathbb Q}\mathbb C.
\]
Both realizations carry a weight filtration $W$, compatible with
this comparison.  These data satisfy the axioms of
\cite[Section~3]{SaitoIntroduction}.  If $\mathbb M$ is smooth on
$X$, its restriction corresponds to an admissible variation of
mixed Hodge structures $\mathbb V$, with underlying local system
$V_{\mathbb Q}$ and
\[
j_1^*M_B=V_{\mathbb Q}[1];
\]
see \cite[Theorem~2.2]{SaitoIntroduction}.  A pure variation of
weight $m$ corresponds to a pure Hodge module of weight $m+1$.

We consider modules smooth on $X$ for which the weight-graded
quotients of $\mathbb V$ are constant Tate variations.  Thus
$\mathbb V$ has Hodge--Tate fibers and a unipotent underlying
local system.
Admissibility is required at $0,1,\infty$: the Hodge filtration
extends to the Deligne extension with locally free double-graded
quotients, and the relative monodromy filtrations exist; see
\cite[(3.13)]{SteenbrinkZucker}.

At $1$ we use the coordinate $1-z$ and denote nearby and vanishing
cycles by $\Psi_1$ and $\Phi_1$, with the perverse shift.  Their
values on a mixed Hodge module are mixed Hodge structures.
For a variation $\mathbb V$, $\Psi_1(\mathbb V)$ denotes the
limiting mixed Hodge structure along the positive real tangent.
We require the vanishing-cycle structure $\Phi_1(\mathbb M)$ to
be Hodge--Tate as well.

For such a variation, $\Psi_1(\mathbb V)$ is Hodge--Tate.
Indeed, monodromy is trivial on the weight-graded pieces, and only
even weights occur.  Thus logarithmic monodromy lowers $W$ by two,
the relative monodromy filtration is $W$ itself, and its graded
limits are the constant Tate quotients.  Write
\begin{equation}
N:\Psi_1(\mathbb V)\longrightarrow\Psi_1(\mathbb V)(-1)
\label{eq:nearby-monodromy}
\end{equation}
for logarithmic monodromy, with its Tate twist.

The gluing theorem \cite[Section~2.4 and Theorem~2.5]{SaitoIntroduction}
gives the following explicit description.  An object $\mathbb M$
with these properties is specified by $\mathbb V$, a
finite-dimensional Hodge--Tate structure $\Phi$, and morphisms
of mixed Hodge structures
\begin{equation}
\begin{tikzcd}[cramped, sep=small]
\Psi_1(\mathbb V) \arrow[r,"\can"]
& \Phi \arrow[r,"\Var"]
& \Psi_1(\mathbb V)(-1)
\end{tikzcd}
\qquad \Var\circ\can=N.
\label{eq:hodge-gluing-data}
\end{equation}
Here $\Phi=\Phi_1(\mathbb M)$ and $\Var$ is
logarithmic variation.  Morphisms are pairs of morphisms on
$\mathbb V$ and $\Phi$ commuting with both arrows.

\begin{definition}
\label{def:hodge-tate-module}
An object $\mathbb M=(\mathbb V,\Phi,\can,\Var)$
as above is called a \emph{Hodge--Tate module smooth outside $1$}
on $\mathbb C^*$.
\end{definition}

\subsection{Shrinking}
\label{subsec:shrinking}

Let $j_0:\mathbb C^*\hookrightarrow\mathbb A^1$ and
$j_\infty:\mathbb A^1\hookrightarrow\mathbb P^1$ be the inclusions, and put
\[
H(\mathbb M)=H^0\!\left(\mathbb P^1,
j_{\infty!}j_{0*}\mathbb M\right).
\]
Direct images here and below are derived.
The functorial mixed Hodge structure on $H(\mathbb M)$ is given by
the direct-image functors for mixed Hodge modules
\cite[Sections~1.2--1.4]{SaitoIntroduction}.
Write $j_{**!}=j_{\infty!}j_{0*}j_{1*}$.
For $\mathbb M=j_{1*}\mathbb V[1]$, one has
$H(\mathbb M)=H^1(j_{**!}\mathbb V)$.
In the following statements, $\Phi=\Phi_1(\mathbb M)$ and
$\Phi_B$ denotes its rational Betti realization.

\begin{prop}[Shrinking]
\label{prop:shrinking}
For every Hodge--Tate module $\mathbb M$ smooth outside $1$,
cohomology $H^k(\mathbb P^1,j_{\infty!}j_{0*}M_B)$ vanishes
for $k\ne0$.
The interval $I=(0,1]$ gives a natural isomorphism
\begin{equation}
\varphi_I:\Phi_B\xrightarrow{\sim}H(\mathbb M)_B.
\label{eq:shrinking}
\end{equation}
Both functors are exact.
\end{prop}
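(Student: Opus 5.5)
We argue purely on the Betti side: the statement concerns only the perverse sheaf $M_B$ and the rational structure on $\Phi$, and the Hodge-theoretic structure enters later. Put $K=j_{\infty!}j_{0*}M_B$ on $\mathbb P^1$. The idea is to retract $\mathbb P^1$ onto a neighbourhood of the interval $I=(0,1]$, using that the extension conditions at $0$ and $\infty$ kill the contributions from neighbourhoods of these points.

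\textbf{Step 1 (local vanishing at $0$ and $\infty$).} Near $\infty$ the extension is by zero, so $H^*_c$-type contributions vanish and $\Gamma(U_\infty,K)$ of a small disc is computed relative to its boundary. Near $0$, the stalk of $j_{0*}$ is the cohomology of a punctured disc with unipotent local system, which coincides with the cohomology of the punctured disc itself. Consequently restricting from $\mathbb P^1$ to the complement of a small disc around $\infty$, with compact support along the boundary, and contracting the disc at $0$ onto the base point, shows that
\[
R\Gamma(\mathbb P^1,K)\simeq R\Gamma\bigl(D,\partial D;\,M_B|_D\bigr),
\]
where $D$ is a large closed disc. By the Mayer--Vietoris argument this is the usual statement that $j_{0*}$ kills nothing and $j_{\infty!}$ imposes the relative condition.

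\textbf{Step 2 (deformation onto the segment).} The local system $V_{\mathbb Q}$ is smooth on $D\setminus\{0,1\}$. We deform $D$ so that the relative cohomology is supported near the segment $[0,1]$. The segment from $0$ is harmless because of the $*$-extension, which contributes the same as a point. The remaining contribution is localized at $1$ relative to the boundary of a small disc around $1$, cut along the ray toward $\infty$, that is, along the complement of $I$. This local relative cohomology $R\Gamma(B_1, B_1\cap\{\text{ray}\};M_B)$ is by definition, or by the standard description of vanishing cycles via a half-line, the complex $\Phi_1(M_B)$ placed in degree $0$ for a perverse sheaf. This yields both the concentration in degree $0$ and the isomorphism $\varphi_I$. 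Naturality holds because every identification is induced by restriction maps of sheaves.

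\textbf{Step 3 (exactness).} $\Phi_1$ is $t$-exact on perverse sheaves, so $\mathbb M\mapsto\Phi_B$ is exact. The isomorphism of Step 2 then makes $H$ exact as well. Alternatively, the vanishing of $H^{\ne0}$ together with the long exact cohomology sequence gives exactness directly.

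\textbf{Main obstacle.} The main difficulty is Step 2: making the topological retraction precise. One has to check that the $*$-extension at $0$ and the $!$-extension at $\infty$ contribute nothing beyond the cut along $I$. The identification of this with $\Phi_1$ must also use the correct half-line convention, namely the complement of the positive tangent direction $1-z>0$. To handle this we would first treat the case $M_B=j_{1*}V[1]$ and the case of a skyscraper at $1$ directly. We would then reduce the general case to these two via the gluing triangle for $\Phi$, together with the five lemma.
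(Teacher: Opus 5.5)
Your argument is correct in substance, and your Step~3 is exactly the paper's exactness argument. The difference is in the other two assertions. The paper does not prove the vanishing or the isomorphism $\varphi_I$ here; it imports both from \cite[Proposition~2]{MarkarianConvolution}, whereas you reconstruct the topology behind them.

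Your route is self-contained. It also shows that only perversity of $M_B$ and local constancy on $X$ are used, so the five-lemma reduction in your last paragraph is unnecessary. The citation, for its part, fixes \emph{which} isomorphism $\varphi_I$ is meant, and the later defect computation depends on that precise map, not just on its existence.

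Your Steps~1--2 are cleanest written with supports:
\begin{itemize}
\item $R\Gamma(\mathbb P^1\setminus[0,1],K)=0$. On this disc $K$ is the extension by zero across $\infty$ of a shifted local system, and $\mathbb C\setminus[0,1]$ retracts onto a large circle. Hence $R\Gamma(\mathbb P^1,K)=R\Gamma_{[0,1]}(\mathbb P^1,K)$.
\item $R\Gamma_{[0,1-\epsilon]}(\mathbb P^1,K)=0$. Take a thin disc-shaped neighbourhood $U$ of $[0,1-\epsilon]$ not containing $1$. The inclusion $U\setminus[0,1-\epsilon]\subset U\setminus\{0\}$ is a homotopy equivalence, and $K$ is a $*$-extension at $0$.
\item Excision then leaves $R\Gamma_{I\cap B_1}(B_1,M_B)$. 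This is the half-line description of $\Phi_B$, concentrated in degree $0$.
\end{itemize}

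Two details of your sketch need correcting:
\begin{itemize}
\item In Step~1 the coefficients near $0$ are $j_{0*}M_B$. So you obtain $R\Gamma(D\setminus\{0\},\partial D;M_B)$, not a relative group of $M_B|_D$.
\item In Step~2 the relative subset at $1$ must avoid the point $1$. Relative cohomology with respect to the closed ray $[1,1+\epsilon)$ vanishes identically, because $R\Gamma(B_1,M_B)$ and $R\Gamma([1,1+\epsilon),M_B)$ are both the stalk at $1$. Use the open ray $(1,1+\epsilon)$ or the arc $\partial B_1\setminus I$; their inclusions into $B_1\setminus I$ are homotopy equivalences.
\end{itemize}
If instead you cut the boundary circle at the ray towards $\infty$, you get vanishing cycles along the opposite half-line, which is a different identification.
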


\begin{proof}
Shrinking and cohomological vanishing are
\cite[Proposition~2]{MarkarianConvolution}.  The vanishing and
the long exact cohomology sequence imply exactness of $H$.
Vanishing cycles are exact in the perverse convention used here.
\end{proof}

\begin{prop}[Hodge--Tate property]
\label{prop:hodge-tate-output}
Let $\mathbb M$ be a Hodge--Tate module smooth outside $1$
on $\mathbb C^*$, in the sense of
Definition~\ref{def:hodge-tate-module}.  The mixed Hodge structure
\[
H(\mathbb M)=H^0\!\left(\mathbb P^1,
j_{\infty!}j_{0*}\mathbb M\right)
\]
is Hodge--Tate, and the shrinking isomorphism $\varphi_I$
is strictly compatible with weights.  The functors $H$ and
$\Phi_1$ commute with Tate twists.
\end{prop}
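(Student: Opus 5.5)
The plan is to reduce everything to rank-one building blocks, using the exactness statements of Proposition~\ref{prop:shrinking}. The weight filtration of $\mathbb M$ in the category of mixed Hodge modules has pure graded pieces. Because $\mathbb V$ and $\Phi$ are Hodge--Tate, each graded piece $\operatorname{Gr}^W_k\mathbb M$ is a direct sum of two kinds of objects: $j_{1!*}\mathbb Q(n)[1]$ for constant Tate variations, and $i_{1*}\mathbb Q(n)$ supported at $1$. This uses the decomposition theorem for pure polarizable Hodge modules, together with the fact that a pure Hodge--Tate module with trivial monodromy on $X$ splits into intermediate extensions and punctual summands.

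Both $H$ and $\Phi_1$ are exact functors, by Proposition~\ref{prop:shrinking}, and both are defined through Saito's functors, so they are compatible with the weight filtrations in the following sense. For $\Phi_1$, the weight filtration on $\Phi_1(\mathbb M)$ is the one coming from the gluing data~\eqref{eq:hodge-gluing-data}. For $H$, the direct images $j_{0*}$ and $j_{\infty!}$ shift weights only in one direction. The weights on the two sides could a priori be shifted differently, so I will not argue via weights directly. Instead I will compute both functors on the building blocks and then use exactness.

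The building blocks are computed as follows.
\begin{itemize}
\item For $i_{1*}\mathbb Q(n)$, $\Phi_1$ gives $\mathbb Q(n)$, and $H$ gives $\mathbb Q(n)$ as the stalk at $1$. Shrinking is the identity here, since the interval contracts onto the point.
\item For $j_{1!*}\mathbb Q(n)[1]=\mathbb Q(n)[1]$ on $\mathbb C^*$, one has $\Phi_1=0$. Also $H=H^1_c$-type cohomology of $\mathbb P^1$ with $*$ at $0$ and $!$ at $\infty$, that is, relative cohomology $H^1(\mathbb C,\{\text{near }\infty\})$ twisted. This vanishes, consistently with Proposition~\ref{prop:shrinking}.
\end{itemize}
So the semisimple pieces $\operatorname{Gr}^W_k\mathbb M$ go to Tate structures of the correct weight under both functors. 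Those weights are determined by the punctual summands $i_{1*}\mathbb Q(n)$, which have weight $-2n$ on both sides. Thus $H(\mathbb M)$ is an iterated extension of Tate structures, hence Hodge--Tate. Tate-twist compatibility is immediate, since all the functors involved commute with $\otimes\mathbb Q(1)$.

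For strict compatibility of $\varphi_I$ with $W$, I use naturality of shrinking in $\mathbb M$, which holds because it is defined topologically for all perverse sheaves of this type. Apply $\varphi_I$ to the filtration $W_\bullet\mathbb M$ by submodules. By exactness, $\Phi_1(W_k\mathbb M)$ and $H(W_k\mathbb M)$ embed as subobjects, and they equal $W_k$ of the respective Hodge structures, because both functors take pure weight-$k$ modules to pure structures of a fixed weight and are exact. Then naturality sends $\Phi_B(W_k\mathbb M)$ onto $H(W_k\mathbb M)_B$. This gives $\varphi_I(W_k\Phi)=W_kH$, which is strictness, since $\varphi_I$ is an isomorphism.

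The main obstacle is the weight bookkeeping. I must check that $\Phi_1$ (with the perverse shift and the weight convention on vanishing cycles from the gluing) and $H$ (with $j_{0*}$ raising and $j_{\infty!}$ lowering weights) assign the same weight to each graded piece, and not weights differing by a constant shift or by the monodromy filtration. The computation on $i_{1*}\mathbb Q(n)$ settles the shift. The real point is that the $j_{1!*}$ pieces contribute nothing on either side. For these I would check directly that the $*$ at $0$ and $!$ at $\infty$ extensions of the constant sheaf kill the cohomology, so that no $N$-induced weight mixing can occur.
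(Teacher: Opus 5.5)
Your proposal is correct and follows essentially the same route as the paper. Both arguments decompose $\operatorname{gr}^W\mathbb M$ by strict support into constant Tate modules on $\mathbb C^*$ and punctual Tate summands at $1$, note that $H$ and $\Phi_1$ vanish on the former and are the identity on the latter (so both send $\operatorname{gr}^W_k$ to pure weight $k$), use exactness to get $H(W_k\mathbb M)=W_kH(\mathbb M)$ and $\Phi_1(W_k\mathbb M)=W_k\Phi_1(\mathbb M)$, and conclude strictness by naturality of shrinking. One small slip: describing $H$ of the constant module as $H^1(\mathbb C,\{\text{near }\infty\})$ drops the derived $*$-condition at $0$, whereas the cleaner justification of its vanishing is Proposition~\ref{prop:shrinking} together with $\Phi_1=0$.
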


\begin{proof}
By Proposition~\ref{prop:shrinking}, the functors $H$ and
$\Phi_1$ are exact.
Consider the weight filtration of $\mathbb M$ in the category
of mixed Hodge modules.  Every pure quotient
$\operatorname{gr}_m^W\mathbb M$ is a direct sum of constant
Tate Hodge modules on $\mathbb C^*$ and Hodge structures
supported at $1$.  Indeed, pure Hodge modules decompose by
strict support, and a summand with full support is the
intermediate extension of its restriction to $X$;
see \cite[Proposition~1.9 and Theorem~2.3]{SaitoIntroduction}.
That restriction is constant in the present situation.

Both $H$ and $\Phi_1$ vanish on constant Hodge modules on
$\mathbb C^*$ and equal the identity on objects supported
at $1$.  Their values on $\operatorname{gr}_m^W\mathbb M$
are therefore pure of weight $m$.  Consequently,
\[
W_mH(\mathbb M)=H(W_m\mathbb M),\qquad
W_m\Phi_1(\mathbb M)=\Phi_1(W_m\mathbb M).
\]
These graded pieces are Tate because $\Phi_1(\mathbb M)$
is Hodge--Tate.  Thus $H(\mathbb M)$ is Hodge--Tate as well.
Naturality of shrinking with respect to
$W_m\mathbb M\hookrightarrow\mathbb M$ proves strict weight
compatibility.  The constructions commute with Tate twists.
\end{proof}

\section{The Hodge defect}

\subsection{Logarithmic realization}
\label{subsec:logarithmic-realization}

Let $\overline{\mathcal V}$ be the Deligne extension to
$\mathbb P^1$ of the flat bundle of $\mathbb V$.
The extended Hodge filtration splits it into its constant
weight-graded bundles, as in
\cite[Subsection~2.1]{MarkarianHodge}.  Write
$\overline{\mathcal V}=E\otimes\mathcal O_{\mathbb P^1}$,
where $E=\Psi_1(\mathbb V)_{\mathrm{dR}}$ with its Hodge
splitting.  In this trivialization the connection is
\begin{equation}
\nabla=d-N_0\frac{dz}{z}-N_1\frac{dz}{1-z}.
\label{eq:logarithmic-connection}
\end{equation}
The operators $N_0,N_1$ lower weight by two; Griffiths
transversality shows that they have precisely this degree.
In particular, sufficiently long products of them vanish.

Put $c=\can_{\mathrm{dR}}$ and
$v=\Var_{\mathrm{dR}}$ for the components of
\eqref{eq:hodge-gluing-data}.  In matrices we identify the
de Rham Tate line with $\mathbb C$, retaining its filtration
shift $F^p(E(-1))=F^{p-1}E$.  Thus
\begin{equation}
c:E\longrightarrow\Phi_{\mathrm{dR}},\qquad
v:\Phi_{\mathrm{dR}}\longrightarrow E,
\qquad vc=-N_1.
\label{eq:dr-gluing}
\end{equation}
The sign follows from
$\operatorname{Res}_1\nabla=N_1=-N_{\mathrm{dR}}$.
Tate degree $r$ means weight $-2r$.
The map $c$ preserves Tate degree, and $v$ increases it by one.

Write $D=\{0,1,\infty\}$ and $i:\{1\}\hookrightarrow\mathbb P^1$.
The logarithmic complex for $j_{\infty!}j_{0*}\mathbb M$ is
\begin{equation}
\begin{split}
\mathcal K_{\mathbb M}^{-1}
 &=E\otimes\mathcal O_{\mathbb P^1}(-\infty),\\
\mathcal K_{\mathbb M}^{0}
 &=\left\{(\alpha,\xi)\in
 E\otimes\Omega^1_{\mathbb P^1}(\log D)(-\infty)
 \oplus i_*\Phi_{\mathrm{dR}}:
 -\operatorname{Res}_1\alpha=v\xi\right\},\\
d(s)&=(\nabla s,c(s|_1)).
\end{split}
\label{eq:general-logarithmic-complex}
\end{equation}
Here $(-\infty)$ denotes tensor product with
$\mathcal O_{\mathbb P^1}(-\{\infty\})$.
The relation $vc=-N_1$ makes the differential well-defined.
Its $F^p$ part consists of sections with values in $F^pE$
in degree $-1$ and pairs with
$\alpha\in F^{p-1}E\otimes\Omega^1(\log D)(-\infty)$,
$\xi\in F^p\Phi_{\mathrm{dR}}$ in degree $0$.

\begin{prop}
\label{prop:general-logarithmic-realization}
The filtered complex \eqref{eq:general-logarithmic-complex}
computes $H(\mathbb M)_{\mathrm{dR}}$ with its Hodge filtration.
Put $\vartheta=dz/(z(1-z))$.  The map
\begin{equation}
\rho_{\mathbb M}:\Phi_{\mathrm{dR}}
\xrightarrow{\sim}H(\mathbb M)_{\mathrm{dR}},
\qquad \xi\longmapsto[(v\xi\,\vartheta,\xi)]
\label{eq:residue-pair-class}
\end{equation}
is an isomorphism preserving $F$.
\end{prop}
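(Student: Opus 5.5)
The plan has two parts: first identify the filtered complex with the Hodge-theoretic direct image, then compute its hypercohomology directly.

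\emph{Step 1: the complex computes $H(\mathbb M)_{\mathrm{dR}}$ with its Hodge filtration.} I will treat the three boundary points separately.

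At $0$, the logarithmic de Rham complex of the Deligne extension with poles allowed computes $j_{0*}$. Its Hodge filtration is the standard one, $F^p$ in degree $-1$ and $F^{p-1}$ on one-forms, by Saito's description of the $V$-filtration for unipotent monodromy.

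At $\infty$, twisting by $\mathcal O(-\infty)$ gives the logarithmic model for $j_{\infty!}$. This is the dual statement, and the filtration is again the standard one.

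At $1$, the subcomplex with $\xi=0$ and residue condition $\operatorname{Res}_1\alpha=0$ is not what we want. Instead I will use Saito's gluing description (Theorem~2.5 in \cite{SaitoIntroduction}). Near $1$, the $\mathcal D$-module glued from $(\Psi,\Phi,\can,\Var)$ has de Rham complex quasi-isomorphic to the cone-type complex in which $\Phi$ is attached at the residue via $v$ and $c$. The condition $-\operatorname{Res}_1\alpha=v\xi$ encodes $\Var$, and $c(s|_1)$ encodes $\can$. The sign matches the convention $vc=-N_1$ fixed in \eqref{eq:dr-gluing}.

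Strictness and compatibility with $F$ then follow from admissibility. The Hodge filtration extends with locally free graded pieces on the Deligne extension, and $\operatorname{gr}_F$ of the model is the usual logarithmic $\operatorname{gr}_F$ complex. This local identification at $1$ is the main obstacle, and I will handle it by reducing through exactness. Both sides are exact functors of $\mathbb M$ by Proposition~\ref{prop:shrinking}, and the complex is visibly exact in $\mathbb M$ as well. Filtering by weight, as in Proposition~\ref{prop:hodge-tate-output}, it then suffices to check two building blocks: a Tate object supported at $1$, where the complex is just $\Phi$ in degree $0$, and a constant Tate variation. In both cases the comparison is classical.

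\emph{Step 2: $\rho_{\mathbb M}$ is well defined and preserves $F$.} The form $\vartheta=dz/z+dz/(1-z)$ has residue $1$ at $0$ and $-1$ at $1$. At $\infty$ it vanishes to order two, since $\vartheta\sim -dz/z^2$, so it lies in $\Omega^1(\log D)(-\infty)$. Hence $-\operatorname{Res}_1(v\xi\vartheta)=v\xi$, and the pair is a cochain. It is a cocycle for degree reasons, because the complex has length two.

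For $F$: if $\xi\in F^p\Phi$, then $v\xi\in F^{p-1}E$, since $v$ raises Tate degree by one with the filtration shift. So the pair lies in $F^p$ of the degree-$0$ term.

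\emph{Step 3: $\rho_{\mathbb M}$ is an isomorphism.} Compute hypercohomology with the sheaves spelled out.

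For the degree $-1$ term, $E\otimes\mathcal O(-\infty)$ has vanishing $H^0$ and $H^1$.

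For the degree $0$ term, consider the sheaf $\mathcal K^0$. It sits in an exact sequence
\[
0\to E\otimes\Omega^1(\log\{0\})\to\mathcal K^0\to i_*\Phi\to 0,
\]
where the kernel consists of the forms with no residue at $1$ and the projection is $(\alpha,\xi)\mapsto\xi$. The kernel sheaf is $E\otimes\Omega^1(\log\{0\})(-\infty)\cong E\otimes\mathcal O(-2)$, so its $H^0$ and $H^1$ both vanish as well. Therefore $H^0(\mathcal K^0)\cong\Phi$, and $H^1(\mathcal K^0)=0$.

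The hypercohomology spectral sequence then gives $\mathbb H^0=H^0(\mathcal K^0)\cong\Phi$. Every global section of $\mathcal K^0$ is of the form $(v\xi\vartheta,\xi)$, which is precisely $\rho_{\mathbb M}$. So $\rho_{\mathbb M}$ is an isomorphism.

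Finally, the same computation applied to each $F^p$ shows that $F^pH$ is exactly the image of $F^p\Phi$. Hence $\rho_{\mathbb M}$ is a filtered isomorphism, and in particular it preserves $F$.
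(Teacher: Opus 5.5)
Your Steps~2 and~3 match the last paragraph of the paper's proof. Step~1, however, carries the real content of the first assertion, and it has a gap.

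You say that Saito's gluing makes the de Rham complex near $1$ ``quasi-isomorphic to the cone-type complex.'' That restates what is to be proved. The reduction you then offer for the filtered statement does not work either. Exactness of both sides in $\mathbb M$, together with agreement on Tate objects supported at $1$ and on constant Tate variations, does not show that the filtration induced by $\mathcal K_{\mathbb M}$ is the Hodge filtration. A five-lemma d\'evissage along $W$ needs a natural \emph{filtered} morphism between the two sides to begin with, and you have not constructed one.

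To see that the inference fails, take a natural unipotent automorphism $g=1+c\,h(N_0,vc)\,v$, as in the proof of Theorem~\ref{thm:general-defect-transport}. Replacing $F$ on $\Phi_{\mathrm{dR}}$ by $gF$ gives another strictly exact filtered functor. It agrees with $F$ on every building block, since $c$ or $v$ vanishes there, yet it differs from $F$ whenever $g$ moves $F$. Extension data of this kind are invisible to the graded pieces, and ``compatibility with $F$ follows from admissibility'' does not supply them.

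The missing idea is the localization (recollement) triangle. $j_{\infty!}j_{0*}\mathbb M$ is the homotopy fiber product of two objects over $i_*i^*j_{1*}\mathbb V[1]$:
\begin{itemize}
\item the $*$-extension, with the filtered logarithmic complex of \cite[Proposition~2]{MarkarianHodge};
\item $i_*i^*\mathbb M$.
\end{itemize}
The base $i^*j_{1*}\mathbb V[1]$ is $[E\xrightarrow{N_1}E]$ with filtrations $F^pE,F^{p-1}E$. The restriction from the $*$-extension complex is evaluation in degree $-1$ and residue in degree $0$. By the Milnor triangle, $i^*\mathbb M$ is $[E\xrightarrow{c}\Phi_{\mathrm{dR}}]$, mapping to the base by $(1,-v)$; this is where $vc=-N_1$ enters. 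Evaluation and residue are surjective on every $F^p$, so the ordinary fiber product computes the filtered homotopy fiber product. That fiber product is literally \eqref{eq:general-logarithmic-complex} with the stated filtration, which is what justifies the filtration on $\mathcal K^0_{\mathbb M}$.

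There is also a slip in Step~3. The kernel of $\mathcal K^0_{\mathbb M}\to i_*\Phi_{\mathrm{dR}}$ is $E\otimes\Omega^1(\log\{0\})$, a sum of copies of $\mathcal O(-1)$, as your own displayed sequence says. It is not $E\otimes\Omega^1(\log\{0\})(-\infty)\cong E\otimes\mathcal O(-2)$. For $\mathcal O(-2)$ your vanishing claim would be false, since $H^1(\mathbb P^1,\mathcal O(-2))\neq0$. Finally, $\vartheta$ is regular and nonvanishing at $\infty$ rather than vanishing to order two, though its membership in $\Omega^1(\log D)(-\infty)$ is still correct.
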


\begin{proof}
For the $*$-extension at $1$, use the filtered logarithmic
complex of \cite[Proposition~2]{MarkarianHodge}.
The derived boundary object $i^*j_{1*}\mathbb V[1]$ is represented by
$[E\xrightarrow{N_1}E]$, in degrees $-1,0$, with filtrations
$F^pE,F^{p-1}E$.  The restriction map is evaluation in
degree $-1$ and residue in degree $0$.
The Milnor triangle represents $i^*\mathbb M$ by
$[E\xrightarrow{c}\Phi_{\mathrm{dR}}]$, and its map to this
boundary complex has components $(1,-v)$.
These identifications respect the Hodge filtrations by the
gluing construction
\cite[Sections~2.4--2.5]{SaitoIntroduction}.

The localization triangle reconstructs $j_{\infty!}j_{0*}\mathbb M$
as the homotopy fiber product of the $*$-extension and
$i_*i^*\mathbb M$ over its restriction at $1$.
Evaluation and residue are surjective on every $F^p$,
so the ordinary fiber product computes this filtered homotopy
fiber product.  It is exactly
\eqref{eq:general-logarithmic-complex}.

Projection of $\mathcal K_{\mathbb M}^0$ onto $i_*\Phi_{\mathrm{dR}}$
is surjective on each $F^p$.  Its kernel is
\[
E\otimes\Omega^1(\log D)(-\infty-\{1\}),
\]
a sum of copies of $\mathcal O_{\mathbb P^1}(-1)$.
This kernel and $\mathcal K_{\mathbb M}^{-1}$, together with
all their Hodge subbundles, have zero cohomology.
Since $\operatorname{Res}_1\vartheta=-1$, their global
cohomology is therefore identified by the lift
$\xi\mapsto(v\xi\,\vartheta,\xi)$, proving the assertion.
\end{proof}

\subsection{The comparison square}
\label{subsec:comparison-square}

For a Hodge--Tate structure $K$, write
\[
s_K^F:\operatorname{gr}^W K_{B,\mathbb C}
\xrightarrow{\sim}K_{B,\mathbb C}
\]
for the splitting supplied by the Hodge filtration: its summand
of weight $2p$ is the inverse of the projection
$F^pK_{B,\mathbb C}\cap W_{2p}K_{B,\mathbb C}
\to\operatorname{gr}_{2p}^W K_{B,\mathbb C}$.
Here $K_{B,\mathbb C}=K_B\otimes\mathbb C$.
Put $H=H(\mathbb M)$ and extend $\varphi_I$ to $\mathbb C$.
Proposition~\ref{prop:hodge-tate-output} gives the square
\begin{equation}
\begin{tikzcd}[cramped, column sep=large, row sep=large]
\operatorname{gr}^W\Phi_{B,\mathbb C}
  \arrow[r,"s_\Phi^F"]
  \arrow[d,"\operatorname{gr}^W\varphi_I"']
&\Phi_{B,\mathbb C}\arrow[d,"\varphi_I"]\\
\operatorname{gr}^W H_{B,\mathbb C}
  \arrow[r,"s_H^F"']
&H_{B,\mathbb C}.
\end{tikzcd}
\label{eq:comparison-square}
\end{equation}
All four arrows are isomorphisms.  Define
\begin{equation}
\mathcal G_I(\mathbb M)=
s_\Phi^F(\operatorname{gr}^W\varphi_I)^{-1}
(s_H^F)^{-1}\varphi_I.
\label{eq:defect-automorphism}
\end{equation}
We call this automorphism of $\Phi_{B,\mathbb C}$ the
\emph{Hodge defect of shrinking along $I$}, or simply the
\emph{Hodge defect}.  It induces the identity on
$\operatorname{gr}^W$ and is itself the identity exactly when
$\varphi_I$ respects the Hodge filtration.

The residue formula in Proposition~\ref{prop:general-logarithmic-realization}
describes the route through the Hodge splittings explicitly.
Indeed, its map $\rho_{\mathbb M}$ preserves $F$ and, by
naturality on $W_m\mathbb M$, also $W$.
On a pure quotient of $\mathbb M$, both $\rho_{\mathbb M}$ and
shrinking are the identity on the summands supported at $1$;
the constant smooth summands contribute zero.  Their maps on
$\operatorname{gr}^W$ therefore agree under Betti--de Rham
comparison.  Since the Hodge splitting is functorial,
$\rho_{\mathbb M}$ is precisely the isomorphism obtained from
$\operatorname{gr}^W\varphi_I$ by these splittings.

Let $c_\Phi:\Phi_{B,\mathbb C}\to\Phi_{\mathrm{dR}}$ and
$c_H:H_{B,\mathbb C}\to H_{\mathrm{dR}}$ be the comparison maps.
Thus the de Rham expression for \eqref{eq:defect-automorphism} is
\begin{equation}
c_\Phi\mathcal G_I(\mathbb M)c_\Phi^{-1}
=\rho_{\mathbb M}^{-1}c_H\varphi_Ic_\Phi^{-1}.
\label{eq:defect-dr-calculation}
\end{equation}
We use the same notation $\mathcal G_I(\mathbb M)$ for this
operator in the Hodge splitting of $\Phi_{\mathrm{dR}}$.
We calculate this comparison using the integral formula of
\cite[Propositions~5--6]{MarkarianHodge} and naturality in the
gluing data, as explained in the next subsection.

\subsection{The associator and the transport algebra}
\label{subsec:defect-transport}

We use the Drinfeld associator \cite[Section~2]{Drinfeld}
with the following convention.  For
\[
dG=\left(e_0\frac{dz}{z}+e_1\frac{dz}{z-1}\right)G,
\]
the solutions $G_0,G_1$ satisfy
$G_0(z)z^{-e_0}\to1$ at $0$ and
$G_1(z)(1-z)^{-e_1}\to1$ at $1$, and
$A=G_1^{-1}G_0$.  If $A_{\mathrm H}$ denotes the associator of
\cite[Subsection~3.2]{MarkarianHodge}, the conventions are related by
\[
A(e_0,e_1)=A_{\mathrm H}(e_0,-e_1).
\]
Denote by $e_1b_1$ the sum of the terms of
$A^{-1}$ beginning with $e_1$, and put
\begin{equation}
U(e_1)=\sum_{m\geq0}\frac{(2\pi i)^m e_1^m}{(m+1)!}.
\label{eq:associator-transport-series}
\end{equation}
Here $e_0,e_1$ are the residue letters at $0,1$.
For the logarithmic connection \eqref{eq:logarithmic-connection},
they are evaluated at $N_0,-N_1$, respectively, since
$dz/(1-z)=-dz/(z-1)$.  Local Betti monodromy at $1$ is
$\exp(2\pi i e_1)$ in these coordinates.

We use the logarithmic transport algebra of
\cite[Subsection~5.3]{MarkarianConvolution}, with a unit adjoined
and complex coefficients.  In the coordinates of the
differential equation, it has the presentation
\begin{equation}
W=
\mathbb C\cdot1\oplus
\mathbb C\langle\!\langle e_0,e_1\rangle\!\rangle e_1.
\label{eq:dr-transport-algebra}
\end{equation}
All completions are taken with respect to total degree in $e_0,e_1$.
Its action is obtained by composing logarithmic
variation, transport on nearby cycles, and $\can$.
We specify the coordinates by their action: if $\operatorname{rev}$
reverses words, the right action of $ae_1$ on $\Phi_{\mathrm{dR}}$ is
the column operator
\begin{equation}
R(ae_1)=c\,\operatorname{rev}(a)(N_0,-N_1)\,v,
\qquad R(1)=1.
\label{eq:dr-transport-action}
\end{equation}
Indeed, $vc=-N_1$ gives $R(xy)=R(y)R(x)$, as required for a
right action.  All series act by finite sums.

For a series $f=1+f_0e_0+f_1e_1$, put $f_Y=1+f_1e_1$.

\begin{theorem}
\label{thm:general-defect-transport}
Let $\mathbb M$ be a Hodge--Tate module smooth outside $1$
on $\mathbb C^*$, in the sense of
Definition~\ref{def:hodge-tate-module}.  Its Hodge defect is the action of
\begin{equation}
g_I=U(e_1)A(-e_0,-e_1)_Y\in W,
\qquad \mathcal G_I(\mathbb M)=R(g_I).
\label{eq:general-defect-transport}
\end{equation}
More explicitly, in the Hodge frame of $\Phi$ one has
\begin{equation}
\mathcal G_I(\mathbb M)=
1+c\left(
\frac{U(e_1)-1}{e_1}+b_1U(e_1)
\right)(N_0,-N_1)v.
\label{eq:general-defect-explicit}
\end{equation}
The quotient in this formula denotes the power series obtained
by deleting one $e_1$ from each term of $U(e_1)-1$.
The coefficients of $g_I$ belong to the algebra generated by
multiple zeta values and $2\pi i$.
\end{theorem}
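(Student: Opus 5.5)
Our plan is to compute the right-hand side of \eqref{eq:defect-dr-calculation} directly, i.e.\ the operator $\rho_{\mathbb M}^{-1}c_H\varphi_Ic_\Phi^{-1}$ on $\Phi_{\mathrm{dR}}$. We would use the logarithmic realization of Proposition~\ref{prop:general-logarithmic-realization} and a period integral over the interval. We begin by making the Betti class $c_H\varphi_I(\beta)$ explicit for $\beta\in\Phi_{B,\mathbb C}$. Shrinking along $I=(0,1]$ realizes $\beta$ as a relative cycle: a flat section of $V$ on $(0,1)$, of moderate growth at $0$ (allowed by the $*$-extension), together with a small loop around $1$ that encodes the vanishing-cycle datum. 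Its pairing with a de Rham class $[(\alpha,\xi)]$ in $\mathcal K^0_{\mathbb M}$ is the regularized integral of $\alpha$ along $I$ against the flat frame, plus a boundary contribution $\xi$ at $1$. We would solve flat sections through the fundamental solution $G$ of \eqref{eq:logarithmic-connection}, with $e_0\mapsto N_0$ and $e_1\mapsto -N_1$. Writing the flat frame on $I$ as normalized at $1$ by $G_1$, and converting to the normalization at $0$ (needed for the $*$-condition at $0$), introduces $A=G_1^{-1}G_0$.

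Next we reduce to the universal case by naturality in the gluing data. Both sides of \eqref{eq:general-defect-transport} are functorial under morphisms of Hodge--Tate modules. They are also exact and compatible with Tate twists (Propositions~\ref{prop:shrinking} and \ref{prop:hodge-tate-output}). The right side is $R(g)$ for some $g\in W$, because any operator on $\Phi$ that is natural in $(\mathbb V,\Phi,c,v)$ and unipotent for $W$ must be $1$ plus a sum of words $c\,w(N_0,-N_1)\,v$. This is the transport-algebra statement of \cite[Subsection~5.3]{MarkarianConvolution}. So it suffices to identify $g$, and this can be done on iterated Kummer extensions with the gluing $\Phi=$ the $!$-type or $*$-type data. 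There \cite[Propositions~5--6]{MarkarianHodge} already computes the defect through the associator $A_{\mathrm H}$. We would translate it with $A(e_0,e_1)=A_{\mathrm H}(e_0,-e_1)$ and the reversal built into $R$. This produces $A(-e_0,-e_1)$, with the sign coming from inverting the transport direction.

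Only the terms of $A(-e_0,-e_1)$ ending (after reversal, beginning) in $e_1$ survive, since $v$ lands after a residue at $1$ and $N_0$-words on the right act trivially on $\operatorname{im}v$ modulo the $*$-condition at $0$. This explains the projection to $f_Y$. The factor $U(e_1)$ comes from the local loop at $1$. Integrating the residue form $v\xi\,\vartheta$ around a half-turn and full loop against $(1-z)^{e_1}$ gives $(e^{2\pi i e_1}-1)/(2\pi i e_1)=U(e_1)$. Combining these gives \eqref{eq:general-defect-transport}. The explicit form \eqref{eq:general-defect-explicit} then follows by expanding $U(e_1)(1+b_1e_1\cdot(\text{reversed}))$ and applying \eqref{eq:dr-transport-action}. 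Finally, the coefficients are MZVs and powers of $2\pi i$, because the coefficients of $A$ are MZVs.

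We expect the main obstacle to be the sign and ordering bookkeeping. This covers matching the orientation of $I$, the reversal in $R$, the relation $vc=-N_1$, and the convention change $A$ versus $A_{\mathrm H}$, and getting exactly $A(-e_0,-e_1)$ rather than its inverse. The same bookkeeping decides whether $U$ multiplies on the left or the right. We would fix these by checking the rank-two Kummer case $\mathbb V=\mathbb Q\oplus\mathbb Q(1)$ with $\log(1-z)$ periods. We would also check the case of $\Phi$ a point extension, where the defect must reduce to $1+c\,\frac{U(e_1)-1}{e_1}v$.
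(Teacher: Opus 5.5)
Your architecture matches the paper's: naturality gives a universal form $1+c\,h(N_0,-N_1)\,v$, then $h$ is read off from the Kummer computation of \cite{MarkarianHodge}, then conventions are converted. The gap is in the step that produces the universal form.

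You justify it only by functoriality under morphisms of Hodge--Tate modules. Those morphisms are rational and preserve $F$ and $W$, hence preserve the Hodge splittings. Operators natural for this smaller class of morphisms need not come from the transport algebra. For example, let $\pi_{2p}$ be the projection onto the weight-$2p$ summand of the Hodge splitting of $\Phi_{\mathrm{dR}}$. Then $1+\pi_{2p}\,c\,N_0\,v$ is natural for Hodge morphisms and $W$-unipotent, but it is not of the form $R(g)$. Since $\mathcal G_I$ is itself defined through Hodge splittings, nothing in your argument excludes such terms. Citing the transport algebra of \cite{MarkarianConvolution} does not help, because that description concerns natural endomorphisms on a category where the Hodge defect is not a priori defined.

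The missing idea is to extend the de Rham operator $\rho_{\mathbb M}^{-1}c_H\varphi_Ic_\Phi^{-1}$ to arbitrary finite complex gluing data $(E,\Phi,N_0,c,v)$, with $N_1=-vc$, ungraded, without rational structure, and with all long paths acting by zero. This works because in the logarithmic frame everything is built from these data alone. The map $\rho_{\mathbb M}$ is the residue-pair formula. Shrinking transported through Riemann--Hilbert uses only the connection, $G_1$, and the topological gluing maps $c$ and $2\pi i\,U(-N_1)\,v$.

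The extended operator is natural for all linear maps commuting with the arrows. Now take the degree-truncated path algebra with a loop $N_0$ at $E$ and arrows $c,v$. The functor to the $\Phi$-space is represented by a regular projective module. By Yoneda, its natural endomorphisms are spanned by closed paths at $\Phi$, namely $1$ and $c\,h(N_0,vc)\,v$. The constant term is $1$ because shrinking is the identity on objects supported at $1$.

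The identification of $h$ and the bookkeeping also need more than you indicate.

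\emph{Injectivity.} You need an injective specialization. For the $*$-extension of free Kummer data, $v=1$ and $c=-N_1$. This sends $1+c\,h\,v$ to $1+e_1h$ in the free series algebra. Then \cite[Theorem~1]{MarkarianHodge} identifies this with $(1+e_1b_1)U(e_1)$.

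\emph{The $Y$-projection and the position of $U$.} That identification already fixes both. Every nonconstant term of $1+e_1h$ begins with $e_1$ simply because $c$ is the leftmost letter. It is not because $N_0$-words kill $\operatorname{im}v$; they do not, since for the $*$-extension $\operatorname{im}v=E$.

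\emph{The sign change.} Reversal then yields $U(e_1)A(-e_0,-e_1)_Y$. The sign change comes from the group-like identity $\operatorname{rev}(A^{-1})=A(-e_0,-e_1)$, not merely from inverting the transport direction.

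\emph{Your proposed checks.} Low-rank examples can at best discriminate among candidate formulas, and the rank-two Kummer case cannot even do that for the position of $U$. It only sees words of degree at most one. By contrast, $U(e_1)F$ and $F\,U(e_1)$ first differ in degree three, at $e_1e_0e_1$ versus $e_0e_1e_1$.
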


\begin{proof}
We first explain why the calculation for the $*$-extension
determines the answer for arbitrary gluing data.  The logarithmic
complex, its local Betti comparison, and interval shrinking are
defined for finite complex gluing data
$(E,\Phi,N_0,c,v)$, with $N_1=-vc$, for which all paths of
sufficiently large degree act by zero; the degree is specified below.
No rational structure is needed for this analytic comparison.
Let $S_{\mathrm{dR}}$ be shrinking transported through the complex
Riemann--Hilbert comparison.  The residue-pair map
$\rho_{\mathbb M}$ is defined by the same formula for these
complex data, so $\rho_{\mathbb M}^{-1}S_{\mathrm{dR}}$ defines
a natural operator on them.  For Hodge modules this is exactly
\eqref{eq:defect-dr-calculation}.
The local comparison uses $G_1$ on
nearby cycles and the topological gluing maps $c$ and
$2\pi i\,U(-N_1)v$; their composite is
$\exp(-2\pi iN_1)-1$.
Neither these data nor their morphisms need be graded.

Here is the precise algebraic consequence of naturality.
Consider the path algebra with vertices $E,\Phi$, a loop $N_0$
at $E$, and arrows $c:E\to\Phi$, $v:\Phi\to E$.
Give $N_0,v$ degree $1$ and $c$ degree $0$, and complete by
degree.  Its quotients by paths of degree greater than $m$ are
finite-dimensional.  On each quotient, natural endomorphisms
of the functor taking a representation to its $\Phi$-space
are exactly the linear combinations of paths starting and
ending at $\Phi$: evaluate on the corresponding regular
projective module, which represents this functor.
Compatibility with the quotients gives the same assertion for
the completion.  Every path from $\Phi$ to itself, other than
the constant path, is uniquely of the form
$c\,h(N_0,vc)\,v$.
On an object supported at $1$, shrinking is the identity.
Consequently the universal comparison operator has the form
$1+c\,h(N_0,-N_1)\,v$ for a noncommutative series $h$.

For the $*$-extension, the logarithmic variation frame gives
$v=1$ and $c=-N_1$, which is the evaluation of $e_1$.
This specialization is injective on the corner series: it
takes $1+chv$ to $1+e_1h$ in the free power-series algebra.
The calculation in \cite[Theorem~1]{MarkarianHodge} identifies the latter
operator with $(1+e_1b_1)U(e_1)$.  Hence
\[
h=\frac{U(e_1)-1}{e_1}+b_1U(e_1),
\]
which proves \eqref{eq:general-defect-explicit}.

The group-like property of the associator gives
\[
\operatorname{rev}(A^{-1})=A(-e_0,-e_1).
\]
Reversing $(1+e_1b_1)U(e_1)$ therefore gives
$U(e_1)A(-e_0,-e_1)_Y$.  Formula
\eqref{eq:dr-transport-action} now identifies
\eqref{eq:general-defect-explicit} with
\eqref{eq:general-defect-transport}.
The coefficient assertion follows from the corresponding
assertion for $A$ and the displayed formula for $U$.
\end{proof}

\section{Double shuffle relations}

\subsection{Multiplicative convolution}
\label{subsec:multiplicative-convolution}

Let $\mathbb V_1,\mathbb V_2$ be two iterated Kummer extensions
on $X$ as in \cite[Subsection~1.1]{MarkarianHodge}, and put
\[
\mathbb E=j_{**!}\mathbb V_1[1],\qquad
\mathbb F=j_{**!}\mathbb V_2[1].
\]
Recall the compactification of multiplication from
\cite[Subsections~2.1--2.2]{MarkarianConvolution}.
Let $S$ be the blow-up of $\mathbb P^1\times\mathbb P^1$ at
$(0,0)$ and $(\infty,\infty)$.  In coordinates $(x,y)$, the maps
$p_5,p_4,p_3:S\to\mathbb P^1$ are respectively the extensions
of $x,y,y/x$.  Write $\iota(z)=z^{-1}$.
Define the convolution by the full derived direct image
\begin{equation}
\mathbb E\ast\mathbb F
:=\mathbf R p_{3*}\!\left(
{p_5}^*\iota^*\mathbb E\otimes
{p_4}^*\mathbb F\right).
\label{eq:compactified-convolution}
\end{equation}
The construction takes place in the derived category of
mixed Hodge modules.

\begin{prop}
\label{prop:convolution-hodge-module}
The restriction $\mathbb M=(\mathbb E\ast\mathbb F)|_{\mathbb C^*}$ is a
Hodge--Tate module smooth outside $1$, in the sense of
Definition~\ref{def:hodge-tate-module}.  In particular, its
underlying perverse sheaf restricts to a shifted local system
on $X$.  There is a canonical isomorphism
\begin{equation}
\mathbb E\ast\mathbb F\simeq j_{\infty!}j_{0*}\mathbb M.
\label{eq:convolution-boundary-extension}
\end{equation}
\end{prop}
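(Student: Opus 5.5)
We would prove Proposition~\ref{prop:convolution-hodge-module} in three steps: first the boundary behaviour \eqref{eq:convolution-boundary-extension}, then smoothness on $X$, then the Hodge--Tate conditions of Definition~\ref{def:hodge-tate-module}.

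\emph{Step 1: the extension at $0$ and $\infty$.} Saito's direct image \cite[Sections~1.2--1.4]{SaitoIntroduction} makes $\mathbb E\ast\mathbb F$ a complex of mixed Hodge modules. We would compute its stalks at $0$ and at $\infty$ by proper base change along $p_3$. On the blow-up $S$, the fiber $p_3^{-1}(0)$ consists of the proper transform of $\{y=0\}$ together with part of the exceptional divisor over $(0,0)$. Along it, $p_4^*\mathbb F$ has the $*$-extension at $y=0$ and $p_5^*\iota^*\mathbb E$ has the $!$-extension at $x=\infty$. The fiber over $\infty$ has the symmetric description, with the roles of $*$ and $!$ interchanged.

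These topological computations are the content of \cite[Subsections~2.1--2.2]{MarkarianConvolution}. They show two things: the $!$-restriction of $\mathbb E\ast\mathbb F$ to $\infty$ vanishes, and its $*$-restriction to $0$ vanishes. Equivalently, the adjunction maps
\[
j_{\infty!}j_\infty^*(\mathbb E\ast\mathbb F)\to\mathbb E\ast\mathbb F
\qquad\text{and}\qquad
j_0^*(\mathbb E\ast\mathbb F)\to j_{0*}\mathbb M
\]
are isomorphisms. Since these maps are morphisms of mixed Hodge modules and are determined by their Betti realizations, this yields \eqref{eq:convolution-boundary-extension}. We would also check that $\mathbb M$ is concentrated in perverse degree $0$. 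Fiberwise, the convolution is $H^*$ of a rank-one-type unipotent system on $\mathbb P^1$ minus four points, extended by $!$ and $*$. Its cohomology is concentrated in degree $1$, by the Euler characteristic together with the vanishing of $H^0$ and $H^2$ coming from the mixed $!/*$ extensions.

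\emph{Step 2: smoothness on $X$ and the weight-graded pieces.} Over $t\in X$, the map $p_3$ is locally topologically trivial relative to the divisor $\{x=0,1,\infty\}\cup\{y=0,1,\infty\}$. Indeed, the only non-transversal coincidence, $x=1=y$, lies over $t=1$, and the blow-ups remove the tangencies at $(0,0)$ and $(\infty,\infty)$. Hence $M_B|_X$ is a shifted local system and $\mathbb M|_X$ is an admissible variation $\mathbb V$.

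To show that $\operatorname{gr}^W\mathbb V$ is a sum of constant Tate variations, we would use the exactness of convolution in each variable, which follows from the fiberwise vanishing in Step 1. Then we induct on the Kummer filtrations of $\mathbb V_1,\mathbb V_2$, reducing to $\mathbb V_i=\mathbb Q(n_i)$. For these, $\mathbb E\ast\mathbb F$ is a Tate twist of $j_{**!}\mathbb Q\ast j_{**!}\mathbb Q$, which is computed directly from the geometry of $S$. Its restriction to $X$ is an extension of constant Tate variations, built from $H^*$ of $\mathbb P^1$ minus points and their Tate twists. This gives unipotent monodromy with Hodge--Tate fibers.

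\emph{Step 3: the vanishing cycles at $1$.} Here we would use that vanishing cycles commute with proper direct image. This gives
\[
\Phi_1(\mathbb M)\simeq H^*\bigl(p_3^{-1}(1),\Phi_{p_3-1}(\ldots)\bigr).
\]
This is supported at the point $(1,1)$, where $p_3-1=(y-x)/x$ meets the two divisors $x=1$ and $y=1$. After blowing up $(1,1)$, which is the degeneration mentioned in the introduction, the vanishing cycles are computed on the exceptional $\mathbb P^1$ with three marked points. Again by exactness and the Kummer induction, the result is an iterated extension of Tate structures, hence Hodge--Tate.

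The main obstacle is this last point. It requires controlling the Hodge structure on vanishing cycles at a non-normal-crossing point. We would handle it by first reducing to $\mathbb Q(n_1)\ast\mathbb Q(n_2)$ and then computing explicitly on the exceptional component, where all the relevant cohomology is that of $\mathbb P^1$ minus three points and is therefore Tate.
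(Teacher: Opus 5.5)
Your architecture matches the paper's: proper base change along $p_3$ at $0$ and $\infty$, then reduction through the Tate filtrations of $\mathbb V_1,\mathbb V_2$ to constant inputs. However, Step~1 is wrong as written.

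\emph{Stalks and costalks are interchanged.} For $j_{\infty!}j_{0*}\mathbb M$ one needs the \emph{stalk} $i_\infty^*$ at $\infty$ and the \emph{costalk} $i_0^!$ at $0$ to vanish. You state the opposite, $i_\infty^!=0$ and $i_0^*=0$. That would characterize $j_{\infty*}j_{0!}\mathbb M$, so your ``equivalently'' with the adjunction isomorphisms fails.

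\emph{The local data are also incorrect.} Since $\iota$ exchanges $0$ and $\infty$, $p_5^*\iota^*\mathbb E$ is $!$-extended along $x=0$ and $*$-extended along $x=\infty$. The exceptional curves of $S$ are sections of $p_3$: over $(0,0)$, with $y=xt$, one has $p_3=t$. So they are not fiber components. In fact $p_3^{-1}(0)$ is the union of the proper transforms of $\{y=0\}$ and $\{x=\infty\}$, and $p_3^{-1}(\infty)$ is the union of those of $\{x=0\}$ and $\{y=\infty\}$. With your mixture of $*$ and $!$ over $0$, neither vanishing would follow.

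The correct version runs as follows. Over $\infty$, every component carries a $!$-extended factor, so the coefficient complex restricts to zero on the fiber and the stalk vanishes. Over $0$, both components carry $*$ conditions. Near the point on the exceptional curve, the complex is $*$ across $t=0$ and $!$ across $x=0$, and these commute. Hence the $!$-restriction to the fiber vanishes, and base change gives zero costalk.

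The second gap is the module property at $z=1$. Your fiberwise count, and your local triviality argument (which is fine), concern only fibers over $X$. They do not exclude perverse cohomology sheaves of the untruncated direct image supported at $1$ in nonzero degrees. Likewise, exactness of convolution does not follow from generic fibers. It follows from the triangles of the Tate filtrations once the constant case is known to be a single module, and that is exactly what your ``computed directly from the geometry of $S$'' leaves open.

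The paper settles this with the residue sequence $0\to\mathbb L\to\mathbb K\to i_{1*}\mathbb Q(-1)\to0$, where $\mathbb L=\mathbb Q_{\mathbb C^*}[1]$ and $\mathbb K=j_{1*}\mathbb Q_X[1]$. Because the connecting map is the residue isomorphism, $\mathbf R\Gamma_c(\mathbb C^*,\mathbb K)=\mathbb Q(0)$. The change $(u,v)\mapsto(uv,v)$ then gives $\mathbf Rm_!(\mathbb L\boxtimes\mathbb K)=\mathbb L$. Convolving the sequence with $\mathbb K$ yields a triangle whose outer terms are the modules $\mathbb L$ and $\mathbb K(-1)$. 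So $\mathbb K\ast\mathbb K$ is an extension of $\mathbb K(-1)$ by $\mathbb L$. This gives at once perversity at $1$, constant Tate graded pieces on $X$, and, by exactness of $\Phi_1$, $\Phi_1=\mathbb Q(-2)$.

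Your Step~3 is therefore unnecessary: no analysis at the non-normal-crossing point is needed for this proposition. The exceptional component enters only later, for the beta factor, and the lemma there itself relies on this proposition.
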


\begin{proof}
On $\mathbb C^*$, the construction is multiplicative convolution:
\[
\mathbb M=\mathbf Rm_!\!\left(
j_{1*}\mathbb V_1[1]\boxtimes j_{1*}\mathbb V_2[1]\right),
\qquad m(u,v)=uv.
\]
This is the fiber calculation in
\cite[proof of Proposition~6]{MarkarianConvolution}.
The functor $j_{1*}$ is exact on shifted variations, and the
class of Definition~\ref{def:hodge-tate-module} is closed under
extensions.  Applying the Tate filtrations of the two factors
therefore reduces the first assertion to
$\mathbb V_1=\mathbb V_2=\mathbb Q_X$, up to Tate twists.

Put $\mathbb L=\mathbb Q_{\mathbb C^*}[1]$ and
$\mathbb K=j_{1*}\mathbb Q_X[1]$, and let
$i_1:\{1\}\hookrightarrow\mathbb C^*$ be the inclusion.
The residue sequence is an exact sequence of mixed Hodge modules
\[
0\longrightarrow \mathbb L\longrightarrow \mathbb K
\longrightarrow i_{1*}\mathbb Q(-1)\longrightarrow0.
\]
It gives $\mathbf R\Gamma_c(\mathbb C^*,\mathbb K)\simeq\mathbb Q(0)$
in degree zero: the connecting map
$\mathbb Q(-1)\to H_c^2(\mathbb C^*,\mathbb Q)$ is the
residue isomorphism.  Changing coordinates from $(u,v)$ to
$(uv,v)$ consequently gives $\mathbf Rm_!(\mathbb L\boxtimes \mathbb K)=\mathbb L$.
Convolving the residue sequence with $\mathbb K$ yields a triangle
whose outer terms are $\mathbb L$ and $\mathbb K(-1)$.  Hence its middle term
is a mixed Hodge module, with an exact sequence
\[
0\longrightarrow \mathbb L\longrightarrow\mathbf Rm_!(\mathbb K\boxtimes \mathbb K)
\longrightarrow \mathbb K(-1)\longrightarrow0.
\]
On $X$ it is an extension of the constant Tate variations
$\mathbb Q_X(-1)$ and $\mathbb Q_X(0)$.  Exactness of vanishing
cycles gives
$\Phi_1(\mathbf Rm_!(\mathbb K\boxtimes \mathbb K))=\mathbb Q(-2)$,
since $\Phi_1(\mathbb L)=0$ and $\Phi_1(\mathbb K)=\mathbb Q(-1)$.
This proves the first assertion and justifies using the
untruncated direct image in \eqref{eq:compactified-convolution}.

For the boundary conditions, the calculation in
\cite[proof of Proposition~6]{MarkarianConvolution} applies
before truncation.  Over $\infty$, the coefficient complex
restricts to zero on both components of the fiber of $p_3$,
because one of its factors is extended by $!$.
Over $0$, it is a $*$-extension across both components;
near an exceptional point, coordinates $y=xt$ give a
$*$-extension across $t=0$ and a $!$-extension across $x=0$.
These extensions across transverse divisors commute.
Proper base change therefore gives zero stalk at
$\infty$ and zero costalk at $0$ for $\mathbb E\ast\mathbb F$.
These are precisely the conditions in
\eqref{eq:convolution-boundary-extension}.
\end{proof}

\begin{prop}
\label{prop:convolution-cohomology}
The groups $H^k(\mathbb P^1,(\mathbb E\ast\mathbb F)[-1])$ vanish for $k\ne1$.
There is a canonical isomorphism of mixed Hodge structures
\begin{equation}
H^1\!\left(\mathbb P^1,(\mathbb E\ast\mathbb F)[-1]\right)
\simeq
H^1(j_{**!}\mathbb V_1)\otimes
H^1(j_{**!}\mathbb V_2).
\label{eq:convolution-kunneth}
\end{equation}
\end{prop}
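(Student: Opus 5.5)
The plan is to compute the cohomology by proper base change and Künneth on the compactified surface $S$. Since $p_3$ is proper, the derived direct image commutes with global sections:
\[
\mathbf R\Gamma\!\left(\mathbb P^1,\mathbb E\ast\mathbb F\right)
\simeq\mathbf R\Gamma\!\left(S,{p_5}^*\iota^*\mathbb E\otimes{p_4}^*\mathbb F\right).
\]
This identification holds in the derived category of mixed Hodge modules, because direct images compose: $a_{\mathbb P^1*}\mathbf Rp_{3*}=a_{S*}$.

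It then remains to compare $S$ with $\mathbb P^1\times\mathbb P^1$. Let $\pi:S\to\mathbb P^1\times\mathbb P^1$ be the blow-down, so that $p_5,p_4$ factor through the two projections. The coefficient complex is $\pi^*(\iota^*\mathbb E\boxtimes\mathbb F)$, up to the shift conventions for external products, and I will show that the adjunction map
\[
\iota^*\mathbb E\boxtimes\mathbb F\to\mathbf R\pi_*\pi^*(\iota^*\mathbb E\boxtimes\mathbb F)
\]
is an isomorphism. Away from $(0,0)$ and $(\infty,\infty)$, $\pi$ is an isomorphism. At the two blown-up points, proper base change reduces the check to comparing stalks, that is, to showing that cohomology of the restriction to the exceptional divisor matches the stalk at the point.

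At $(\infty,\infty)$ the claim is immediate: $\mathbb F$ is extended by $!$ at $\infty$, and $\iota^*\mathbb E$ is extended by $!$ at $0$ and $*$ at $\infty$. Both sides vanish, so only the stalk vanishing is needed.

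At $(0,0)$ we have $*$-extensions in both factors, since $\iota^*$ swaps the $*$ at $0$ and the $!$ at $\infty$ of $\mathbb E$. This is the point where I expect to borrow the local calculation from the proof of Proposition~\ref{prop:convolution-hodge-module}. The approach is to work in the chart $y=xt$ and reduce, as there, to Tate-twisted constant pieces using exactness and the Tate filtrations of $\mathbb V_1,\mathbb V_2$. Then I check that $\mathbf R\Gamma$ of a $*$-extension of a constant unipotent system on a neighborhood of the exceptional curve equals that on a punctured neighborhood of the origin. This follows because $\mathbf Rj_*$ commutes with proper pullback up to homotopy: the complement of the exceptional divisor is isomorphic to the complement of the point, and both sides compute cohomology of the same punctured neighborhood. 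Handling this carefully is the main obstacle. If it is problematic, I would instead verify the statement directly for $\mathbb V_i=\mathbb Q_X$ and pass to extensions by the five lemma.

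Granted this, the Künneth formula for mixed Hodge modules, that is, the compatibility of $a_*$ with external products, gives
\[
\mathbf R\Gamma(S,\ldots)\simeq\mathbf R\Gamma(\mathbb P^1,\iota^*\mathbb E)\otimes\mathbf R\Gamma(\mathbb P^1,\mathbb F).
\]
Since $\iota$ is an automorphism, $\mathbf R\Gamma(\mathbb P^1,\iota^*\mathbb E)=\mathbf R\Gamma(\mathbb P^1,\mathbb E)$, and Proposition~\ref{prop:shrinking} concentrates each factor in degree $0$ with value $H^1(j_{**!}\mathbb V_i)$. The tensor product is therefore concentrated in a single degree. After the shift by $[-1]$ and the shift conventions on $S$, this degree is $1$, which gives both the vanishing and the isomorphism \eqref{eq:convolution-kunneth} of mixed Hodge structures. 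Canonicity follows because every step is a natural transformation of functors.
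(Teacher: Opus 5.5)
The overall structure of your proposal is the paper's: use proper base change to pass from $S$ to $\mathbb P^1\times\mathbb P^1$, show that the adjunction $\iota^*\mathbb E\boxtimes\mathbb F\to\mathbf Rb_*b^*(\iota^*\mathbb E\boxtimes\mathbb F)$ is an isomorphism, and then apply K\"unneth. Deducing the vanishing from K\"unneth and Proposition~\ref{prop:shrinking} on each factor is also fine; the paper gets it from the boundary-extension isomorphism instead. The gap is your treatment of the center $(0,0)$. There you assert that both factors are $*$-extensions. But, as you noted one sentence earlier, $\iota^*\mathbb E$ is extended by $!$ at $0$: $\iota$ exchanges $0$ and $\infty$, and $\mathbb E$ is a $!$-extension at $\infty$. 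So the stalk of $\iota^*\mathbb E\boxtimes\mathbb F$ vanishes at $(0,0)$, just as it does at $(\infty,\infty)$, where $\mathbb F$ is the $!$-factor. The same one-line stalk-vanishing argument therefore handles both centers, and this is exactly what the paper does.

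The argument you sketch for the case you thought you faced would also fail. Let $K$ be any complex, $P$ a blown-up point, and $E_P$ its exceptional curve. The restriction $b^*K|_{E_P}$ is constant with fiber $K_P$. So, on underlying complexes, proper base change gives $(\mathbf Rb_*b^*K)_P\simeq K_P\otimes\mathbf R\Gamma(\mathbb P^1,\mathbb Q)=K_P\oplus K_P[-2]$. The adjunction is an isomorphism at $P$ precisely when $K_P=0$. With two $*$-extensions and a nonzero stalk, the comparison with $\mathbb P^1\times\mathbb P^1$ would acquire an extra term $K_P[-2]$, and the K\"unneth identification would break. Your ``same punctured neighborhood'' reasoning conflates $b^*\mathbf Rj_*$ with $\mathbf R\tilde j_*b^*$, the $*$-extension across the total transform. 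The coefficient complex is $b^*\mathbf Rj_*$. Along $E_P$ it is constant with stalk $(\mathbf Rj_*L)_P$, so its cohomology near $E_P$ is not the cohomology of the punctured neighborhood. The fallback of reducing to constant $\mathbb V_i$ by the five lemma does not help, since the problem is the extension type, not the variation. The vanishing of the stalks at both centers, supplied by the $!$-conditions, is the essential input of this step.
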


\begin{proof}
The cohomological vanishing follows from
\eqref{eq:convolution-boundary-extension} and
Proposition~\ref{prop:shrinking}.
For the isomorphism, we use the argument of
\cite[Proposition~7]{MarkarianConvolution} in mixed Hodge
modules.  Let $b:S\to\mathbb P^1\times\mathbb P^1$ be the
blow-up map and put
$\mathbb B=\iota^*\mathbb E\boxtimes\mathbb F$.
The complex inside the direct image in
\eqref{eq:compactified-convolution} is $b^*\mathbb B$.
The derived stalk of $\mathbb B$ vanishes at both blow-up
centers: $\iota^*\mathbb E$ is extended by $!$ at $0$,
and $\mathbb F$ is extended by $!$ at $\infty$.
Thus proper base change shows that the adjunction
$\mathbb B\to\mathbf Rb_*b^*\mathbb B$ is an isomorphism.
K\"unneth now gives
\[
\mathbf R\Gamma(\mathbb P^1,\mathbb E\ast\mathbb F)
\simeq
\mathbf R\Gamma(\mathbb P^1,\mathbb E)\otimes
\mathbf R\Gamma(\mathbb P^1,\mathbb F).
\]
All maps are induced by the operations on mixed Hodge modules,
so this is an isomorphism in the derived category of mixed
Hodge structures.  Taking degree zero and removing the
perverse shifts gives \eqref{eq:convolution-kunneth}.
\end{proof}

\subsection{Beta factor}
\label{subsec:beta-factor}

We calculate the mixed Hodge structure on
$\Phi_1(\mathbb E\ast\mathbb F)$ using the exceptional component
of the fiber over $1$.  The construction of
\cite[Subsection~3.3]{MarkarianConvolution} gives an isomorphism
of rational vector spaces
\begin{equation}
\kappa_B:\Phi_1(\mathbb E)_B\otimes\Phi_1(\mathbb F)_B
\xrightarrow{\sim}\Phi_1(\mathbb E\ast\mathbb F)_B.
\label{eq:convolution-vanishing-betti}
\end{equation}
The period calculation measures its compatibility with the
Hodge splittings.

Put $E_i=\Psi_1(\mathbb V_i)_{\mathrm{dR}}$, and write the
connection of $\mathbb V_i$ as in
\eqref{eq:logarithmic-connection}, with operators
$N_0^{(i)},N_1^{(i)}$.
On $E_1\otimes E_2$ put
\[
a=N_1^{(1)}\otimes1,\qquad b=1\otimes N_1^{(2)}.
\]
These operators commute and are nilpotent.
Logarithmic variation identifies the de Rham realizations of
$\Phi_1(\mathbb E)$ and $\Phi_1(\mathbb F)$ with $E_1(-1)$ and
$E_2(-1)$, respectively.
We use these identifications and suppress Tate twists only in
matrix formulas.

Blow up $S$ once more at $(x,y)=(1,1)$.  With $z=y/x$, use the
coordinates
\[
h=1-z,\qquad t=\frac{1-y}{h}.
\]
The exceptional component is $\mathbb P^1_t$; its intersection
with the other component of the fiber is $t=\infty$.
Since
\[
1-y=ht,\qquad 1-\frac1x=\frac{h(1-t)}{1-ht},
\]
the two singular points on it are $t=0$ and $t=1$.
Let $\mathbb T$ be the tensor product of the logarithmic
specializations of the two factors at $1$, pulled back by
$1-t$ and $t$, respectively.  On
$\mathbb P^1_t\setminus\{0,1,\infty\}$ its logarithmic realization is
\begin{equation}
\nabla_{\mathbb T}=d+b\frac{dt}{t}-a\frac{dt}{1-t},
\qquad G(t)=t^{-b}(1-t)^{-a}.
\label{eq:exceptional-component-connection}
\end{equation}
Here the Hodge filtration is the constant tensor-product
filtration on $E_1\otimes E_2$.  Its Betti realization is the
product of the Verdier specializations of the two factors,
with the first pulled back by $t\mapsto1-t$.  On $0<t<1$ the
powers in \eqref{eq:exceptional-component-connection} use real
logarithms.  Extend $\mathbb T$ by $*$ at $0,1$ and then by
zero across the node $\infty$, and put
\[
H_{\mathbb T}=H^1(\mathbb P^1_t,j_{**!}\mathbb T).
\]

\begin{lemma}
\label{lem:exceptional-component-mhs}
The exceptional component gives an isomorphism of mixed Hodge
structures
\begin{equation}
\lambda:H_{\mathbb T}(-1)
\xrightarrow{\sim}\Phi_1(\mathbb E\ast\mathbb F).
\label{eq:exceptional-component-mhs}
\end{equation}
\end{lemma}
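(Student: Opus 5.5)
We compute $\Phi_1(\mathbb E\ast\mathbb F)$ by proper base change to the fiber of the further blown-up surface over $z=1$, and identify the result with the cohomology of the exceptional component. Let $\tilde S\to S$ be the blow-up at $(1,1)$ and $\tilde p_3:\tilde S\to\mathbb P^1$ the composite with $p_3$. Pulling back the coefficient complex $\mathbb C_S={p_5}^*\iota^*\mathbb E\otimes{p_4}^*\mathbb F$ to $\tilde S$ does not change its direct image, because the blow-up map is proper and birational, and the adjunction is an isomorphism. Since $\Phi_1$ commutes with proper direct images of mixed Hodge modules, this gives
\[
\Phi_1(\mathbb E\ast\mathbb F)\simeq \mathbf R\Gamma\bigl(\tilde p_3^{-1}(1),\Phi_{h}(\mathbb C_{\tilde S})\bigr),
\]
with $h=1-z$ the coordinate used above.

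The fiber over $1$ has two components: the strict transform of the diagonal $x=y$, and the exceptional curve $\mathbb P^1_t$. The next step is a local analysis showing that $\Phi_h$ of the coefficient complex is supported on $\mathbb P^1_t$. Away from $t\in\{0,1,\infty\}$ on the exceptional curve, and away from the points over $x=1$ or $y=1$ on the diagonal component, $h$ is a smooth function and the coefficients are smooth. There $\Phi_h=0$ because the relevant variations are unipotent but locally constant near a reduced fiber.

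Along the diagonal component, the factors are smooth away from the node $t=\infty$, except possibly at $x=0,\infty$. At $x=0,\infty$ one of the factors is $!$-extended, just as in the boundary analysis of Proposition~\ref{prop:convolution-hodge-module}, so the contributions vanish.

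Near a generic point of $\mathbb P^1_t$ the function $h$ is a coordinate vanishing to first order, and the coefficients are $h$-unipotent. In the $(h,t)$ chart, $1-y=ht$ and $1-1/x=h(1-t)/(1-ht)$. Hence $\mathbb C_{\tilde S}$ is locally the external product of the $*$-extended Kummer-type objects in the variables $ht$ and $h(1-t)$. By the Thom–Sebastiani / product formula for nearby cycles of a product of two functions, $\Psi_h\mathbb C_{\tilde S}$ restricted to $\mathbb P^1_t\setminus\{0,1,\infty\}$ is the tensor product of the logarithmic specializations of the two factors, pulled back by $t$ and $1-t$. This is exactly $\mathbb T$, with its limit Hodge filtration the constant tensor filtration. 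This matches the connection \eqref{eq:exceptional-component-connection}.

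At $t=0$ and $t=1$, one factor keeps a $*$-extension along $t=0$ (resp.\ $t=1$), which gives the $*$-extensions. At the node $t=\infty$, the diagonal branch meets transversally. Here the argument is as in Proposition~\ref{prop:convolution-hodge-module}: since $x$ and $y$ are near $1$ there, the relevant factor is instead an extension by zero relative to the vanishing-cycle construction. We expect $\Phi_h$ to produce $j_!$ across the node. Thus $\Phi_h\mathbb C_{\tilde S}\simeq (j_{**!}\mathbb T)[1]$, up to the twist discussed below, and its hypercohomology is concentrated in one degree. This gives $\lambda$ as a morphism in the derived category of mixed Hodge structures, hence an isomorphism of mixed Hodge structures.

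The Tate twist $(-1)$ comes from the fact that $\mathbb P^1_t$ is an exceptional divisor of multiplicity $1$ over the codimension-two blown-up point, together with the convention that logarithmic variation identifies $\Phi_1(\mathbb E)_{\mathrm{dR}}$ with $E_1(-1)$. Counting twists gives a total of $(-2)$ on $\Phi_1(\mathbb E)\otimes\Phi_1(\mathbb F)$. One factor $(-1)$ is absorbed by $H^1$ of the curve, leaving the $(-1)$ in \eqref{eq:exceptional-component-mhs}. As a check, we verify this on $\mathbb V_1=\mathbb V_2=\mathbb Q_X$, where the proof of Proposition~\ref{prop:convolution-hodge-module} gives $\Phi_1=\mathbb Q(-2)$ and $H_{\mathbb T}=\mathbb Q(-1)$.

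The main obstacle is the behaviour at the node $t=\infty$: showing that vanishing cycles yield a $!$-extension there, rather than a $*$- or intermediate extension, and that the diagonal component contributes nothing. We would first attempt this via the local normal-crossing model $h=uw$ at the node, using the commutation of transverse extensions already used in Proposition~\ref{prop:convolution-hodge-module}. We would then check the answer against the rank count in \cite[Subsection~3.3]{MarkarianConvolution}, which gives $\kappa_B$ as an isomorphism on dimensions.
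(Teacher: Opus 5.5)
There is a genuine gap at your first step. Write $\mathcal A$ for your $\mathbb C_S$ and $\beta$ for the blow-up at $p=(1,1)$. Properness and birationality do not make the adjunction $\mathcal A\to\mathbf R\beta_*\beta^*\mathcal A$ an isomorphism. By the projection formula and $\mathbf R\beta_*\mathbb Q_{\tilde S}\simeq\mathbb Q_S\oplus\mathbb Q_p[-2]$, its cone is $\mathcal A_p[-2]$ placed at $p$. The stalk $\mathcal A_p$ is nonzero because both factors are $*$-extensions at $p$; for constant inputs it is, up to shift, the cohomology of a two-torus. The paper uses this adjunction in Proposition~\ref{prop:convolution-cohomology} only because the stalks at the centers vanish there. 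In this lemma it uses it only over $h\neq0$, which is exactly why it works with $\Psi_h$, which sees only $h\neq0$.

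In your argument the error propagates. First, $\mathbf R\Gamma(\tilde p_3^{-1}(1),\Phi_h(\beta^*\mathcal A))$ acquires an extra summand from $\mathcal A_p[-2]$, which $\Phi_1$ does not kill. Second, the claim $\Phi_h(\beta^*\mathcal A)\simeq(j_{**!}\mathbb T)[1]$ is false. Indeed, $\beta^*\mathcal A$ has constant stalk $\mathcal A_p$ along $\mathbb P^1_t$. At a generic point of $\mathbb P^1_t$ the stalk of $\Phi_h$ is therefore the cone of $\mathcal A_p\to\mathbb T_t$; for constant inputs this is $\mathbb Q^2$ and $\mathbb Q$ in two different degrees. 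At the node it is the cone of restriction from the torus to the diagonal circle, which is nonzero. Your description of $\Psi_h$ on $\mathbb P^1_t$ is fine. But passing from $\Psi_h$ to $\Phi_h$ ``up to the twist'' requires $\Var:\Phi_h\to\Psi_h(-1)$ to be an isomorphism along $\mathbb P^1_t$, i.e.\ a $*$-extension across the exceptional divisor, and $\beta^*\mathcal A$ is not one. The twist count in your fourth paragraph is a heuristic standing in for this point.

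The route can be repaired, and it is then genuinely different from the paper's. Replace $\beta^*\mathcal A$ by $\mathcal A'=\mathbf R\tilde j_*\tilde j^*\beta^*\mathcal A$, with $\tilde j$ the inclusion of the complement of $\mathbb P^1_t$. Near $p$, $\mathcal A$ is an external product of two $*$-extensions, so it has zero costalk at $p$; the paper uses the same fact to show $s^!i^!\mathcal A=0$. Hence $\mathbf R\beta_*\mathcal A'\simeq\mathcal A$, and proper base change now gives $\Phi_1(\mathbb E\ast\mathbb F)\simeq\mathbf R\Gamma(\tilde p_3^{-1}(1),\Phi_h\mathcal A')$.

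Near generic points of $\mathbb P^1_t$ and near $t=0,1$, $\mathcal A'$ is a $*$-extension across a normal crossing divisor containing $h=0$. So $\Var$ identifies $\Phi_h\mathcal A'$ with $\Psi_h\mathcal A'(-1)$ there; this is the honest source of the $(-1)$ and of the $*$-conditions at $t=0,1$. At the node, $h=uv$ and $\mathcal A'$ is a $*$-extension across $u=0$ only. The Milnor fiber is an annulus whose loop is the loop around $u=0$, so $i^*\mathcal A'\to\Psi_h$ is an isomorphism there and the stalk of $\Phi_h\mathcal A'$ vanishes. This is the $!$-condition you left as an expectation.

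With these additions you compute $\Phi_1(\mathbb E\ast\mathbb F)$ directly as the hypercohomology of $j_{**!}\mathbb T[1](-1)$ and get bijectivity at once. The price is the full local analysis on the fiber. The paper avoids that analysis. It maps $H_{\mathbb T}$ into $\Psi_1(\mathbb E\ast\mathbb F)$ by extending $\Psi_h(\beta^*\mathcal A)$ by zero from $\mathbb P^1_t$ minus the node. It then factors this map through the injective $\Var$ using the vanishing costalk at $p$. Finally it proves bijectivity by exactness, d\'evissage to constant inputs, and the nonvanishing residue check.
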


\begin{proof}
Let $\beta:\widetilde S\to S$ be the blow-up at $p=(1,1)$,
and denote the coefficient complex in
\eqref{eq:compactified-convolution} by $\mathcal A$.
Write $C=p_3^{-1}(1)$ and
$\widetilde C=(p_3\beta)^{-1}(1)$.
The exceptional component minus its node is $\mathbb A^1_t$;
let $k:\mathbb A^1_t\hookrightarrow\widetilde C$ and
$j_t:\mathbb A^1_t\setminus\{0,1\}\hookrightarrow\mathbb A^1_t$
be the inclusions.  We use perverse nearby cycles
$\Psi_h=\psi_{h,1}[-1]$ and put
$\mathcal N=\Psi_h(\beta^*\mathcal A)$.

The logarithmic description of nearby cycles gives an
identification in mixed Hodge modules
\[
k^*\mathcal N\simeq j_{t*}\mathbb T[1].
\]
Indeed, the substitutions $1-y=ht$ and
$1-1/x=h(1-t)/(1-ht)$ give
\eqref{eq:exceptional-component-connection} on $h=0$,
with the tensor-product limiting Hodge filtration.  The
holomorphic factors of the normalized local fundamental
solutions tend to the identity as $h\to0$, so the remaining
$t$-dependence is precisely $t^{-b}(1-t)^{-a}$.
At $t=0,1$, the divisors have normal crossings; taking
nearby cycles in $h$ retains the $*$ condition at the other
divisor.  The normal monodromy logarithm is $-(a+b)$.
It lowers the coefficient weight filtration by two, so that
filtration is its relative monodromy filtration as well.
These descriptions of the limiting filtrations are those of
\cite[(3.13)]{SteenbrinkZucker}; see also
\cite[Sections~3.2--3.3]{SaitoIntroduction}.

Apply the adjunction $k_!k^*\mathcal N\to\mathcal N$.
Let $\beta_0:\widetilde C\to C$ be the restriction of $\beta$.
The adjunction $\mathcal A\to\mathbf R\beta_*\beta^*\mathcal A$
is an isomorphism over $h\ne0$.  Nearby cycles depend only on
that restriction, so proper base change gives
\[
\mathbf R\beta_{0*}\mathcal N
\simeq\Psi_h(\mathbf R\beta_*\beta^*\mathcal A)
\simeq\Psi_h(\mathcal A).
\]
Taking degree-zero cohomology therefore gives a morphism of
mixed Hodge structures
\[
i_{\mathrm{exc}}:H_{\mathbb T}\longrightarrow
\Psi_1(\mathbb E\ast\mathbb F).
\]
We verify that its image lies in the image of variation.
Let $i:C\hookrightarrow S$ and $s:\{p\}\hookrightarrow C$.
After direct image to $C$, the exceptional supported
object is $s_*H_{\mathbb T}$, concentrated in degree zero
by shrinking.  On rational complexes, the Milnor boundary is
\[
\Psi_h(\mathcal A)\longrightarrow i^!\mathcal A[1].
\]
By adjunction, its composite with the supported map corresponds
to a morphism
$H_{\mathbb T}\to s^!i^!\mathcal A[1]=0$.
The target vanishes because, near $p$, $\mathcal A$
is the external product of the two $*$-extensions, whose
costalks at $1$ vanish.  The Milnor triangle therefore shows that
the displayed map lands in the image of topological variation.

The convolution has no subobject supported at $1$.
The Tate filtrations reduce this assertion to the constant
case of Proposition~\ref{prop:convolution-hodge-module}:
neither $\mathbb L$ nor $\mathbb K(-1)$ has such a subobject,
and the property is preserved by extensions.
Thus logarithmic variation is injective.  It has the same
image as topological variation, and the displayed map,
after the twist $(-1)$, factors uniquely as
\[
H_{\mathbb T}(-1)\xrightarrow{\lambda}
\Phi_1(\mathbb E\ast\mathbb F)
\xrightarrow{\Var}
\Psi_1(\mathbb E\ast\mathbb F)(-1).
\]
Both the original map and logarithmic variation are morphisms
of mixed Hodge structures, so $\lambda$ is one as well.
By construction, $\Var\circ\lambda=i_{\mathrm{exc}}(-1)$.

Both sides of $\lambda$ are exact in each input variation,
by nearby cycles, shrinking, and
Proposition~\ref{prop:convolution-hodge-module}.
Their Tate filtrations therefore reduce invertibility to
constant inputs.  In that case both sides are $\mathbb Q(-2)$,
and the map is nonzero: with $u=1/x=z/(1-ht)$, the exceptional
residue class becomes on a nearby multiplication fiber
\[
\frac{dt}{t(1-t)}
=\frac{du}{u-z}-\frac{du}{u-1}.
\]
Its residues at the two distinct punctures are $1$ and $-1$,
so its class is nonzero.  This proves
\eqref{eq:exceptional-component-mhs}.
\end{proof}

The oriented residue frame
$w\mapsto[w\,dt/(t(1-t))]$ and logarithmic variation on the two
factors give, through $\lambda$, an isomorphism
\[
\kappa_F:
\Phi_1(\mathbb E)_{\mathrm{dR}}\otimes
\Phi_1(\mathbb F)_{\mathrm{dR}}
\xrightarrow{\sim}\Phi_1(\mathbb E\ast\mathbb F)_{\mathrm{dR}}.
\]
It preserves $F$ and $W$ by the residue description of
$H_{\mathbb T,\mathrm{dR}}$ in
\cite[Subsection~2.1]{MarkarianHodge}, applied to
\eqref{eq:exceptional-component-connection}.
Let $c_{\mathbb E},c_{\mathbb F},c_{\mathbb E\ast\mathbb F}$
denote the Betti--de Rham comparisons on vanishing cycles.

\begin{prop}[Beta factor]
\label{prop:beta-factor}
The maps $\kappa_F$ and $\kappa_B$ agree on associated graded
spaces under Betti--de Rham comparison.
The relative period matrix is
\begin{equation}
(c_{\mathbb E}\otimes c_{\mathbb F})\kappa_B^{-1}
 c_{\mathbb E\ast\mathbb F}^{-1}\kappa_F
=\mathcal B(a,b),
\label{eq:convolution-relative-period}
\end{equation}
where
\begin{equation}
\begin{split}
\mathcal B(a,b)
&=(1+a+b)\int_0^1 t^b(1-t)^a\,dt\\
&=\frac{\Gamma(1+a)\Gamma(1+b)}{\Gamma(1+a+b)}.
\end{split}
\label{eq:beta-factor}
\end{equation}
The integral converges entrywise, and the gamma functions are
evaluated by their Taylor series at $1$.
\end{prop}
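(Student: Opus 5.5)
The plan is to transport everything to the exceptional component through Lemma~\ref{lem:exceptional-component-mhs}. There the comparison reduces to a Hodge-defect calculation for a rank-one-type logarithmic connection on $\mathbb P^1_t$ with commuting residues, and that can be done by a single convergent integral.

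\emph{Step 1: reduction to $H_{\mathbb T}$.} By construction $\kappa_F$ is the composite of logarithmic variation on the two factors, the residue frame $w\mapsto[w\,dt/(t(1-t))]$ of $H_{\mathbb T,\mathrm{dR}}$, and $\lambda$. I would check that $\kappa_B$ from \cite[Subsection~3.3]{MarkarianConvolution} factors the same way on the Betti side, as the composite of:
\begin{itemize}
\item topological variation on the factors, identifying $\Phi_1(\mathbb E)_B\otimes\Phi_1(\mathbb F)_B$ with the local Betti nearby fibre of $\mathbb T$ at $0<t<1$;
\item shrinking $\varphi_{(0,1]}$ on $\mathbb P^1_t$, as in Proposition~\ref{prop:shrinking};
\item $\lambda_B$.
\end{itemize}
Since $\lambda$ is an isomorphism of mixed Hodge structures, it cancels from \eqref{eq:convolution-relative-period}. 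Agreement on $\operatorname{gr}^W$ then follows exactly as in Subsection~\ref{subsec:comparison-square}. On pure quotients both frames are the identity on the summands supported at the puncture. Using exactness in each input (Proposition~\ref{prop:convolution-hodge-module}), the Tate filtrations reduce this to constant inputs. There the residue computation at the end of the proof of Lemma~\ref{lem:exceptional-component-mhs} fixes the normalization, with residues $\pm1$.

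\emph{Step 2: the period integral.} Next I would compute $\rho^{-1}c_H\varphi_I$ for $\mathbb T$ directly. The local system $\mathbb T$ has commuting residues $-b$ at $0$ and $a$ at $1$, and the flat section is $G(t)=t^{-b}(1-t)^{-a}$ on $(0,1)$. Pairing the shrinking cycle (the interval $(0,1]$ with its local coefficient, which lies in the image of variation) against the de Rham class $w\,dt/(t(1-t))$ gives, modulo the local correction at the $*$-endpoint,
\[
\int_0^1 G(t)^{-1}\frac{dt}{t(1-t)} \;\text{regularized}.
\]
The variation normalization supplies the factor $a$ (respectively $b$) that cancels the endpoint poles. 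After integration by parts I expect the result to become $(1+a+b)\int_0^1t^b(1-t)^a\,dt$. Concretely, I would write
\[
\frac{1}{t(1-t)}=\frac1t+\frac1{1-t}
\]
and use
\[
\int_0^1 d\bigl(t^{b}(1-t)^{a}\cdot t(1-t)\bigr)\cdot\frac{1}{t(1-t)}\,dt
\]
to absorb the boundary terms. Because $a$ and $b$ are commuting nilpotents, the integral converges entrywise after expanding $t^b=\sum b^k\log^kt/k!$. The closed form is then Euler's beta identity $B(1+b,1+a)=\Gamma(1+a)\Gamma(1+b)/\Gamma(2+a+b)$, which together with $\Gamma(2+a+b)=(1+a+b)\Gamma(1+a+b)$ gives \eqref{eq:beta-factor}. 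This is valid as an identity of formal power series, since both sides are analytic near $a=b=0$.

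\emph{Main obstacle.} The hard part is the sign and normalization bookkeeping in Step~2. Several conventions have to be matched so that the answer is exactly $\mathcal B(a,b)$ rather than its inverse or $\mathcal B(-a,-b)$:
\begin{itemize}
\item the pullbacks by $1-t$ and $t$;
\item the orientation of the residue frame;
\item the $2\pi i\,U(-N_1)$ gluing factor from the proof of Theorem~\ref{thm:general-defect-transport};
\item the identification of $\Phi_1$ with $E_i(-1)$ via variation.
\end{itemize}
To pin these down, I would first treat the case of constant inputs to fix the scalar $1$. I would then do the first-order check: the coefficients of $a$ and $b$ must vanish, since $\Gamma'(1)$ terms cancel in $\mathcal B$, and the first nontrivial term must be $\zeta(2)ab$. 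I would compare this with the single-factor defect $U(e_1)$ of Theorem~\ref{thm:general-defect-transport}, applied to $\mathbb T$ viewed as a module smooth outside $1$.
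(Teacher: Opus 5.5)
\textbf{There is a genuine gap, and it lies in Step~1 rather than in sign bookkeeping.}

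You factor $\kappa_B$ through $\lambda_B$ and then cancel $\lambda$ because it is an isomorphism of mixed Hodge structures. But $\kappa_B$ is built from Milnor complexes. On the convolution side it therefore reaches $\Phi_1(\mathbb E\ast\mathbb F)_B$ by inverting \emph{topological} variation on the exceptional image. By contrast, $\lambda$ is characterized by \emph{logarithmic} variation, $\Var\circ\lambda=i_{\mathrm{exc}}(-1)$.

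The normal monodromy on the exceptional component is $\exp(-2\pi i(a+b))$, as the factor $h^{-a-b}$ shows. Hence $\var=2\pi i\,U(-a-b)\Var$ there, and $\kappa_B$ differs from your composite by $(2\pi i)^{-1}U(-a-b)^{-1}$. This is an operator with nonzero linear term, not a convention.

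Without it, the Betti-to-de Rham comparison you would obtain is, up to a power of $2\pi i$,
\[
(1-a-b)\int_0^1 t^{-b}(1-t)^{-a}\,dt\;U(-a)U(-b)
=\frac{e^{-\pi i(a+b)}}{\Gamma(1+a)\Gamma(1+b)\Gamma(1-a-b)}.
\]
Its linear term is $-\pi i(a+b)$, so your own first-order check would fail, and nothing in your plan repairs this. The paper keeps the convolution-side factor and then applies Euler's reflection formula $U(u)=e^{\pi i u}/(\Gamma(1+u)\Gamma(1-u))$. With it, $\frac{\Gamma(1-a)\Gamma(1-b)}{\Gamma(1-a-b)}\frac{U(-a)U(-b)}{U(-a-b)}$ collapses to $\Gamma(1+a+b)/(\Gamma(1+a)\Gamma(1+b))$, and $\mathcal B(a,b)$ is its inverse. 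Neither this factor nor the reflection step appears in your proposal.

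\textbf{Step~2 has a related problem.} The comparison you set up, $\rho^{-1}c_H\varphi_I$ for $\mathbb T$, runs from Betti to de Rham. The Cauchy calculation of \cite[Proposition~5]{MarkarianHodge}, with $N_0=-b$ and $N_1=a$, integrates the horizontal jump $(2\pi i)^2G(t)U(-a)U(-b)\xi$. That is, it integrates $G=t^{-b}(1-t)^{-a}$ itself, not $G^{-1}$, and gives $q=2\pi i(1-a-b)\bigl(\int_0^1G\,dt\bigr)U(-a)U(-b)\xi$.

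The positive-exponent expression $(1+a+b)\int_0^1t^b(1-t)^a\,dt$ in the statement is only Euler's beta rewriting of the \emph{inverted} answer. It is not the direct output of a period integral, so expecting integration by parts to land on it is reverse-engineering.

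If you instead pair cycles against the de Rham class (the dual direction), your variation normalization supplies only $a$ and $b$. The beta recursion then gives $ab\,B(b,a)=(a+b)\,\mathcal B(a,b)$, and you would still need to divide by the nilpotent $a+b$. That division can only come from the convolution-side variation factor you cancelled in Step~1, together with the endpoint monodromy factors of the regularization.

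In both steps the missing ingredient is therefore the same: the comparison of topological and logarithmic variation on $\Phi_1(\mathbb E\ast\mathbb F)$.
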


\begin{proof}
Recall the power series
$U$ of \eqref{eq:associator-transport-series}.  In the Hodge
frames, topological variation on the two factors is given by
$2\pi i\,U(-a)$ and $2\pi i\,U(-b)$, respectively.
We use the tensor-product interval orientation of
\cite[Proposition~11]{MarkarianConvolution}, with $t$ increasing
from $0$ to $1$ on the exceptional component.
The tensor product of the Milnor complexes in the construction
of \eqref{eq:convolution-vanishing-betti} consequently sends
$\xi\in E_1\otimes E_2$ to the supported class on $(0,1)$ whose
horizontal representative is
\[
(2\pi i)^2G(t)U(-a)U(-b)\xi.
\]
This is the jump of a primitive representing the component
cohomology class.  If its de Rham representative is
$[q\,dt/(t(1-t))]$, the Cauchy calculation of
\cite[Proposition~5]{MarkarianHodge}, with $N_0=-b$ and
$N_1=a$, gives
\begin{equation}
q=2\pi i\,(1-a-b)
\left(\int_0^1G(t)\,dt\right)U(-a)U(-b)\xi.
\label{eq:beta-component-period}
\end{equation}
Indeed, that calculation divides the jump by $2\pi i$ and
then applies $(1-a-b)\int_0^1$.

The normal monodromy on this component is
$\exp(-2\pi i(a+b))$, as the factor $h^{-a-b}$ in the
specialization shows.  Passing from its inclusion in nearby
cycles to vanishing cycles by topological variation therefore
introduces $(2\pi i)^{-1}U(-a-b)^{-1}$ relative to the
logarithmic-variation frame \eqref{eq:exceptional-component-mhs}.
Indeed, in these de Rham frames and with the Tate lines
identified, $\var=2\pi i\,U(-a-b)\Var$ on the exceptional
image, whereas $\Var\circ\lambda=i_{\mathrm{exc}}(-1)$.
Thus
\begin{equation}
\begin{split}
\kappa_F^{-1}c_{\mathbb E\ast\mathbb F}\kappa_B
 (c_{\mathbb E}\otimes c_{\mathbb F})^{-1}
&=(1-a-b)\left(\int_0^1t^{-b}(1-t)^{-a}\,dt\right)
 \frac{U(-a)U(-b)}{U(-a-b)}\\
&=\frac{\Gamma(1-a)\Gamma(1-b)}{\Gamma(1-a-b)}
 \frac{U(-a)U(-b)}{U(-a-b)}.
\end{split}
\label{eq:beta-variation-factors}
\end{equation}
Euler's reflection formula gives
\[
U(u)=\frac{e^{\pi i u}}{\Gamma(1+u)\Gamma(1-u)}.
\]
Substitution in \eqref{eq:beta-variation-factors} gives
$\Gamma(1+a+b)/(\Gamma(1+a)\Gamma(1+b))$.
Taking the inverse proves
\eqref{eq:convolution-relative-period}; Euler's beta integral
gives the integral in \eqref{eq:beta-factor}.
Since $a,b$ are nilpotent, all matrix powers are finite sums
of products of $\log t$ and $\log(1-t)$.  Such products are
integrable on $(0,1)$, so neither integral requires
regularization.
Since $a,b$ lower weight and $\mathcal B(0,0)=1$, formula
\eqref{eq:convolution-relative-period} also shows that $\kappa_B$
preserves $W$ and that its associated graded map agrees with
that of $\kappa_F$ under Betti--de Rham comparison.
\end{proof}

To obtain a period matrix in rational frames, choose weight
framings of $\Phi_1(\mathbb E)$ and $\Phi_1(\mathbb F)$ and
transport their tensor product by $\kappa_B$.
With the period convention of
\cite[Subsection~4.1]{MarkarianHodge}, formula
\eqref{eq:convolution-relative-period} says precisely
\begin{equation}
P_{\Phi_1(\mathbb E\ast\mathbb F)}
=\bigl(P_{\Phi_1(\mathbb E)}\otimes P_{\Phi_1(\mathbb F)}\bigr)
 \mathcal B(a,b).
\label{eq:convolution-vanishing-period-matrix}
\end{equation}
In particular, the new factor depends only on the local
logarithmic monodromies at $1$.  Its coefficients are given by
\[
\mathcal B(a,b)=\exp\left(
\sum_{n\geq2}\frac{(-1)^n\zeta(n)}{n}
\bigl(a^n+b^n-(a+b)^n\bigr)\right).
\]
For example, if $a^2=b^2=0$, then
$\mathcal B(a,b)=1-\zeta(2)ab$.

\subsection{Hodge defect of the convolution}
\label{subsec:convolution-hodge-defect}

Write $H(\mathbb E)=H^0(\mathbb P^1,\mathbb E)$, and similarly
for $\mathbb F$ and their convolution.  Thus these are the
first cohomology groups of the unshifted extensions.
Let
\[
K:H(\mathbb E\ast\mathbb F)
\xrightarrow{\sim}H(\mathbb E)\otimes H(\mathbb F)
\]
be the isomorphism of Proposition~\ref{prop:convolution-cohomology}.
The compatibility theorem
\cite[Theorem~2, Subsection~4.2]{MarkarianConvolution} states
that the following square of Betti realizations commutes:
\begin{equation}
\begin{tikzcd}[cramped, column sep=large, row sep=large]
\Phi_1(\mathbb E)_B\otimes\Phi_1(\mathbb F)_B
 \arrow[r,"\kappa_B"]
 \arrow[d,"\varphi_I\otimes\varphi_I"']
&\Phi_1(\mathbb E\ast\mathbb F)_B\arrow[d,"\varphi_I"]\\
H(\mathbb E)_B\otimes H(\mathbb F)_B
 \arrow[r,"K_B^{-1}"']
&H(\mathbb E\ast\mathbb F)_B.
\end{tikzcd}
\label{eq:convolution-shrinking-square}
\end{equation}
Since $K$ respects the mixed Hodge structures, its de Rham
realization also satisfies
\[
K_{\mathrm{dR}}\rho_{\mathbb E\ast\mathbb F}\kappa_F
=\rho_{\mathbb E}\otimes\rho_{\mathbb F}.
\]
Here $\rho$ is the residue isomorphism
\eqref{eq:residue-pair-class}; restrictions to $\mathbb C^*$
are understood in this notation.
Indeed, both sides preserve the Hodge splittings and induce
the same map on the associated graded of
\eqref{eq:convolution-shrinking-square}.
Combining this equality with
\eqref{eq:defect-dr-calculation} and
\eqref{eq:convolution-relative-period} gives
\begin{equation}
\kappa_F^{-1}\mathcal G_I(\mathbb E\ast\mathbb F)\kappa_F
=\bigl(\mathcal G_I(\mathbb E)\otimes
       \mathcal G_I(\mathbb F)\bigr)\mathcal B(a,b).
\label{eq:convolution-defect-identity}
\end{equation}
This compares the tensor-product Hodge structure supplied by
$K$ with the one calculated from vanishing cycles.

To express \eqref{eq:convolution-defect-identity} in the
transport algebra, we use the Deligne--Terasoma generators
\cite{DeligneTerasoma}; see also
\cite[Subsection~1.2, formula~(1.2.1)]{EnriquezFurusho}:
\begin{equation}
y_n=-e_0^{n-1}e_1\quad(n\geq1),\qquad y_0=1,
\qquad
\Delta_*(y_n)=\sum_{i=0}^n y_i\otimes y_{n-i}.
\label{eq:harmonic-coproduct}
\end{equation}
The $y_n$ freely generate $W$ as a complete algebra, and this
formula defines its harmonic coproduct
$\Delta_*:W\to W\widehat\otimes W$.
Under the change of letters $x_0=e_0$, $x_1=-e_1$, these are
Racinet's generators $x_0^{n-1}x_1$
\cite[Subsection~2.2.5]{Racinet}.

\begin{lemma}
\label{lem:convolution-transport-coproduct}
In the residue frame $\kappa_F$, the action of the transport
algebra on vanishing cycles of the convolution is
\begin{equation}
\kappa_F^{-1}R_{\mathbb E\ast\mathbb F}(w)\kappa_F
=(R_{\mathbb E}\otimes R_{\mathbb F})(\Delta_*(w)).
\label{eq:convolution-transport-coproduct}
\end{equation}
\end{lemma}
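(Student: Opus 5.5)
Since both sides of \eqref{eq:convolution-transport-coproduct} are continuous algebra homomorphisms in $w$ (the right action $R$ is an anti-homomorphism, and $\Delta_*$ is an algebra map, so both sides are anti-homomorphisms $W\to\operatorname{End}$), and the $y_n$ topologically generate $W$, I would check the identity only on the generators $y_n=-e_0^{n-1}e_1$. All actions are by finite sums because $N_0,N_1,a,b$ are nilpotent, so completions cause no trouble.

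The transport-algebra action is built from $\can$, $\Var$ and the residue $N_0$, all morphisms of mixed Hodge structures or of the underlying de Rham data, and $\kappa_F$ is defined through $\lambda$ and logarithmic variation. My first step would therefore be to describe the gluing data of $\mathbb M=\mathbb E\ast\mathbb F$ in the frame $\kappa_F$. Lemma~\ref{lem:exceptional-component-mhs} gives $\Var\circ\lambda=i_{\mathrm{exc}}(-1)$. From the degeneration in its proof I would read off $\Psi_1(\mathbb M)$ and $N_0^{\mathbb M}$, working on the nearby multiplication fiber with $u=1/x$. There the convolution connection is the Gauss--Manin connection of $\mathbb V_1\boxtimes\mathbb V_2$ along $uv=z$.

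The key computation is the residue at $z=0$ of this Gauss--Manin connection, composed between $\can$ and $\Var$. The form $dt/(t(1-t))$ corresponds to $du/(u-z)-du/(u-1)$. Differentiating in $z$ and integrating by parts, I would reduce $c_{\mathbb M}(N_0^{\mathbb M})^{n-1}v_{\mathbb M}$ to sums $\sum_{i}c_1N_0^{(1)\,i-1}v_1\otimes c_2N_0^{(2)\,n-i-1}v_2$. The terms $i=0$ and $i=n$ come from the identity factors $y_0=1$. This is exactly the convolution identity $t^{n}$-by-$t^{n}$ behind $\Delta_*(y_n)=\sum y_i\otimes y_{n-i}$: the Mellin transform of convolution is the product. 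I would organize the calculation as in \cite[Subsection~5.3]{MarkarianConvolution}, where the Betti version of this lemma (the harmonic coproduct for topological transport) is proved. I would then transfer it to the logarithmic frame. The harmonic coproduct statement there, combined with the Beta factor Proposition~\ref{prop:beta-factor}, relates $\kappa_B$ and $\kappa_F$ by $\mathcal B(a,b)$, which is built from $a=-R(e_1)$-type elements. The remaining task is to check that conjugation by $\mathcal B(a,b)$ intertwines $\Delta_*$-actions.

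I expect this transfer to be the main obstacle. Conjugation by $\mathcal B(a,b)$ commutes with $R_{\mathbb E}\otimes R_{\mathbb F}$ of $\Delta_*(w)$ only if $a$ and $b$ act compatibly on the $\Phi$-side, and they do not literally lie on $\Phi$. I would therefore try first the direct de Rham route: verify on generators by the Gauss--Manin residue computation, using Proposition~\ref{prop:general-logarithmic-realization} on the exceptional component to pin down signs. That route avoids $\kappa_B$ entirely. The sign $y_n=-e_0^{n-1}e_1$ and the evaluation $e_1\mapsto-N_1$ must be tracked carefully so that the $i=0,n$ boundary terms match.
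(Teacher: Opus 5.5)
Your final plan is exactly the paper's proof. It reduces to the generators $y_n$ and computes the Gauss--Manin connection of the convolution on the multiplication fiber by differentiating residue classes in $z$ and subtracting an exact term. It then reads off $v$ from $dt/(t(1-t))=du/(u-z)-du/(u-1)$. The paper does this in the frame $\alpha(p,q)=p\eta_z-q\eta_1$ and finds residues $\bigl(\begin{smallmatrix}a_0&b\\0&b_0\end{smallmatrix}\bigr)$ at $0$ and $\bigl(\begin{smallmatrix}a&b\\a&b\end{smallmatrix}\bigr)$ at $1$. This gives $v=(1,1)^t$ and then $c=-(a,b)$ from $vc=-N_1$, so you also need the residue at $z=1$, not only the one at $z=0$ that you emphasize.

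When you carry it out, the residue frame $p\eta_z-q\eta_1$ is available only after you replace the $!$ condition at $u=0$ on the fiber by a $*$ condition. This is harmless: the cone is supported on $u=0$, its direct image is smooth near $z=1$, and $R$ is natural in the gluing data.

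Drop the $\kappa_B$ detour. $\mathcal B(a,b)$ is the image under $R_{\mathbb E}\otimes R_{\mathbb F}$ of $\mathcal B(y_1\otimes1,1\otimes y_1)$, and this element does not commute with $\Delta_*(w)$; already $[y_1\otimes y_1,\Delta_*(y_2)]\neq0$. So conjugation by $\mathcal B(a,b)$ does not intertwine the coproduct actions, and that transfer cannot succeed.
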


\begin{proof}
We give the logarithmic calculation underlying
\cite[Theorem~3]{MarkarianConvolution}, keeping track of its
Hodge frame.  Use coordinates $(u,z/u)$ on a multiplication
fiber, and put
\[
a_0=N_0^{(1)}\otimes1,\qquad b_0=1\otimes N_0^{(2)}.
\]
As before, $a=N_1^{(1)}\otimes1$ and $b=1\otimes N_1^{(2)}$.
Replace the $!$-extension at $u=0$ by the $*$-extension.
The cone is supported on this boundary and its direct image
is smooth near $z=1$, so this replacement does not change
vanishing cycles at $1$.

The relative logarithmic complex now has $*$ conditions at
$u=0,1,z$ and a $!$ condition at $\infty$.
Its degree-zero term is a sum of copies of
$\mathcal O_{\mathbb P^1}(-\infty)$, which has no cohomology.
Consequently its first cohomology has the residue frame
\[
\alpha(p,q)=p\eta_z-q\eta_1,\qquad
\eta_s=\frac{du}{u-s}-\frac{du}{u},
\qquad p,q\in E_1\otimes E_2.
\]
This frame respects the Hodge filtration.  The total connection
is $d-\Omega$, where
\[
\Omega=(a_0-b_0+b)\frac{du}{u}
-a\frac{du}{u-1}-b\frac{du}{u-z}
+b_0\frac{dz}{z}+b\frac{dz}{u-z}.
\]
Differentiating $\alpha(p,q)$ in $z$ and reducing modulo the
relative exact term $\nabla_u(-p/(u-z))$ gives the connection
\begin{equation}
d-
\begin{pmatrix}a_0&b\\0&b_0\end{pmatrix}\frac{dz}{z}
-
\begin{pmatrix}a&b\\a&b\end{pmatrix}\frac{dz}{1-z}.
\label{eq:convolution-residue-matrices}
\end{equation}
The primitive used in this reduction vanishes at $\infty$;
its pole at the $*$ divisor $u=z$ is allowed in the localized
de Rham complex.

In the coordinates of Subsection~\ref{subsec:beta-factor},
$u=z/(1-ht)$, and on each fiber
\[
\eta_z-\eta_1=\frac{dt}{t(1-t)}.
\]
Thus logarithmic variation of the class defining $\kappa_F$
is the column $v=(1,1)^t$ in the frame $\alpha$.
The relation $vc=-N_1$ then gives $c=-(a,b)$.
Formula \eqref{eq:dr-transport-action} yields
\begin{align*}
R_{\mathbb E\ast\mathbb F}(y_n)
&=(a,b)\begin{pmatrix}a_0&b\\0&b_0\end{pmatrix}^{n-1}
  \binom11\\
&=aa_0^{n-1}+bb_0^{n-1}
  +\sum_{k+l=n-2}(aa_0^k)(bb_0^l).
\end{align*}
The sum is empty for $n=1$.
Since operators on different tensor factors commute, this is
exactly the action of \eqref{eq:harmonic-coproduct}.
The assertion for all $w$ follows by multiplication and
completion.
\end{proof}

To detect coefficients, we use finite quotients of the
universal pro-unipotent path variation.  Fix $z_*=1/2$ and
let $P(z_*,z)$ be the set of homotopy classes of paths from
$z_*$ to $z$ in $X$.  If $J$ is the augmentation ideal of
$\mathbb Q[\pi_1(X,z_*)]$, put
\[
\mathbb U_{m,B,z}
=\mathbb Q[P(z_*,z)]/\mathbb Q[P(z_*,z)]J^{m+1}.
\]
The right action is composition with loops at $z_*$.
The inverse system $\mathbb U=\varprojlim_m\mathbb U_m$
is the linearized pro-unipotent universal cover;
we use only its finite quotients.

\begin{lemma}[Universal Kummer variations]
\label{lem:universal-kummer-variations}
Each $\mathbb U_m$ is an admissible variation of mixed
Hodge--Tate structures with constant Tate graded pieces.
Its de Rham realization is the trivial bundle with fiber
\[
T_m=\mathbb C\langle e_0,e_1\rangle/(e_0,e_1)^{m+1}
\]
and connection
\[
\nabla=d-e_0\frac{dz}{z}-e_1\frac{dz}{z-1},
\]
where $e_0,e_1$ act by left multiplication.
For the $*$-extension $j_{1*}\mathbb U_m[1]$, the transport
action satisfies, for $w\in W$,
\[
R(w)(1)=\operatorname{rev}(w)\pmod{(e_0,e_1)^{m+1}}.
\]
Consequently, evaluations on pairs $\mathbb U_m,\mathbb U_n$
detect every coefficient in $W\widehat\otimes W$.
\end{lemma}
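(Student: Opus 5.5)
We prove the three assertions of Lemma~\ref{lem:universal-kummer-variations} in order: the Hodge-theoretic structure, the transport formula, and coefficient detection.

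\emph{Hodge-theoretic structure.} We identify $\mathbb U_m$ with a standard object. The Betti fiber $\mathbb Q[P(z_*,z)]/J^{m+1}$ is the fiber at $z$ of the truncated unipotent fundamental torsor. Hain's theory of the mixed Hodge structure on $\mathbb Q[\pi_1]/J^{m+1}$, or equivalently Chen's iterated integrals, makes it an admissible graded-polarizable variation. Its weight graded pieces are $(J^k/J^{k+1})(\dots)\simeq H_1(X)^{\otimes k}=\mathbb Q(1)^{\oplus 2\,\otimes k}$, so they are constant Tate. Admissibility at $0,1,\infty$ follows because the variation has unipotent monodromy and its Hodge filtration extends as in Hain--Zucker.

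For the de Rham side, Chen's theorem identifies the dual of $J$-adic truncations with iterated integrals of length $\le m$ of $dz/z$ and $dz/(z-1)$. Hence the flat bundle is the trivial bundle with fiber $T_m$, flat sections are $G(z)\cdot x$, and the connection is $d-e_0\,dz/z-e_1\,dz/(z-1)$ with left multiplication. The right action by loops then corresponds to right multiplication, which commutes with the connection. The Hodge filtration is the word-length filtration $F^{-p}$ spanned by words of length $\ge p$ (up to the twist convention). Consequently $N_0=e_0$ and $N_1=-e_1$ as left multiplications, matching \eqref{eq:logarithmic-connection}. Moreover $E=\Psi_1(\mathbb U_m)_{\mathrm{dR}}\simeq T_m$ with its Hodge splitting.

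\emph{Transport formula.} For the $*$-extension, the proof of Theorem~\ref{thm:general-defect-transport} gives the logarithmic variation frame: $\Phi_{\mathrm{dR}}=E(-1)$, $v=1$, $c=-N_1=e_1$. By \eqref{eq:dr-transport-action},
\[
R(ae_1)(1)=c\,\operatorname{rev}(a)(N_0,-N_1)\,v(1)=e_1\cdot\operatorname{rev}(a)(e_0,e_1)\cdot 1 .
\]
Because the letters act by left multiplication, this equals $e_1\operatorname{rev}(a)=\operatorname{rev}(ae_1)$ modulo $(e_0,e_1)^{m+1}$. Together with $R(1)=1$, this gives the claimed formula by linearity and continuity. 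Some care is needed with the ordering: evaluating a word $\operatorname{rev}(a)$ at left-multiplication operators applied to $1$ produces the word itself, which is what produces the reversal.

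\emph{Coefficient detection.} Take $w\in W\widehat\otimes W$ and act on $\Phi(\mathbb U_m)\otimes\Phi(\mathbb U_n)$ applied to $1\otimes1$. The result is $(\operatorname{rev}\otimes\operatorname{rev})(w)$ modulo $(e_0,e_1)^{m+1}\otimes(e_0,e_1)^{n+1}$. Since $T_m\otimes T_n$ has as basis all pairs of words of lengths $\le m,\le n$, every coefficient of $w$ is recovered for $m,n$ large.

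The main obstacle is the first step. One must justify that the Hodge filtration of the universal variation is exactly the constant word-length filtration in this trivialization, as \eqref{eq:logarithmic-connection} presupposes. This is where we rely on Hain's theorem and on the unipotency and Deligne extension argument of \cite[Subsection~2.1]{MarkarianHodge}, noting that the iterated Kummer case there is a quotient of this one. The sign conventions relating $N_1$, $e_1$ and the Tate twist in $\Phi$ also need checking, but they are fixed by the earlier convention $vc=-N_1$.
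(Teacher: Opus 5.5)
Your route is the paper's: you identify $\mathbb U_m$ with the Carlson--Hain path variation realized by truncated holonomy and read off $N_0=L_{e_0}$, $N_1=-L_{e_1}$. You then use $v=1$, $c=-N_1$ in the logarithmic variation frame, so that $R(w)$ is left multiplication by $\operatorname{rev}(w)$, and evaluate on $1\otimes1$. The only difference in method is that the paper does not import the Hodge filtration from Hain's theorem or admissibility from Hain--Zucker. Since both filtrations are constant in this trivialization, it writes them down. It takes the rational structure from truncated holonomy, which is an isomorphism on augmentation-graded pieces because $\pi_1(X,z_*)$ is free on two loops with linear terms $2\pi i e_0,2\pi i e_1$. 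It then checks the axioms by hand. What you call the main obstacle is, in the paper, a direct verification.

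There is, however, a concrete error at exactly that point. Words of length $\ge p$ span the weight piece $W_{-2p}T_m$, not $F^{-p}$. Your $F$ increases with the upper index, and every word would lie in $F^{q}$ for all $q\ge0$. It coincides with $W$ up to reindexing, so $F^{-p}\cap W_{-2p}=W_{-2p}$ does not split $\operatorname{gr}^W$, and $\operatorname{gr}^W_{-2r}$ cannot be $\mathbb Q(r)^{\oplus 2^r}$. No twist convention repairs this, and the Hodge frame you then invoke for \eqref{eq:logarithmic-connection} is unjustified as written.

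The correct filtration is $F^{-r}T_m=\langle w:|w|\le r\rangle$, with $W_{-2r}T_m=\langle w:|w|\ge r\rangle$. This is what Hain's theory gives after dualizing. On iterated integrals of $dz/z,\,dz/(z-1)$, $F^{p}$ is spanned by those of length $\ge p$. Its annihilator under the holonomy pairing is $F^{1-p}T_m$, spanned by words of length $\le p-1$. Then $F^{-r}\cap W_{-2r}$ is spanned by the words of length exactly $r$, so the Hodge splitting is the constant word-length grading. This is what makes your trivialization the frame of \eqref{eq:logarithmic-connection}. Transversality holds because left multiplication by $e_0,e_1$ maps $F^{-r}$ into $F^{-r-1}$.

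With this correction your transport computation and coefficient detection go through and agree with the paper's, apart from one smaller slip. Put $T=\mathbb C\langle\!\langle e_0,e_1\rangle\!\rangle$. The kernel of $T\widehat\otimes T\to T_m\otimes T_n$ is $(e_0,e_1)^{m+1}\widehat\otimes T+T\widehat\otimes(e_0,e_1)^{n+1}$, not $(e_0,e_1)^{m+1}\otimes(e_0,e_1)^{n+1}$.
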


\begin{proof}
This is the canonical path variation of
\cite[Sections~6--7]{CarlsonHain}.  We recall its realization
in the present case.  Truncated holonomy for the displayed
connection maps the path module to $T_m$.  Positive loops
around $0$ and $1$ have linear terms $2\pi i e_0$ and
$2\pi i e_1$.  Since $\pi_1(X,z_*)$ is free on these two loops,
the resulting comparison is an isomorphism on the
augmentation-graded spaces, and hence on every finite quotient.
This supplies the rational Betti realization of the displayed
connection.

Writing $|w|$ for word length, its filtrations are
\[
W_{-2r}T_m=\langle w:|w|\ge r\rangle,
\qquad F^{-r}T_m=\langle w:|w|\le r\rangle.
\]
Thus
$\operatorname{gr}_{-2r}^W\mathbb U_m
\simeq\mathbb Q_X(r)^{\oplus2^r}$ for $0\le r\le m$.
The connection increases word length by one, giving Griffiths
transversality.  Its residues lower $W$ by two, so $W$ itself
is the relative monodromy filtration at each boundary point.
Both filtrations extend to the trivial logarithmic bundle
with locally free double-graded quotients.  This verifies
admissibility and the Hodge--Tate assertion.

In the notation of \eqref{eq:logarithmic-connection},
$N_0$ is left multiplication by $e_0$ and $N_1$ is left
multiplication by $-e_1$.  Logarithmic variation for the
$*$-extension gives $v=1$ and $c=-N_1$.
Formula \eqref{eq:dr-transport-action} therefore makes
$R(w)$ left multiplication by $\operatorname{rev}(w)$.
Application to $1$ proves the displayed identity.
On pairs, application to $1\otimes1$ detects all words of
bidegree at most $(m,n)$.  Increasing $m,n$ detects every
coefficient of the completed tensor product.
\end{proof}

\begin{theorem}[Double shuffle relations]
\label{thm:double-shuffle}
The KZ associator $A$ defined in
Subsection~\ref{subsec:defect-transport} satisfies
\begin{equation}
\mathcal B(y_1\otimes1,1\otimes y_1)\,
\Delta_*(A_Y)=A_Y\otimes A_Y.
\label{eq:double-shuffle}
\end{equation}
Together with the group-like property of $A$ for the coproduct
in which $e_0,e_1$ are primitive, this is the beta-factor form
of Racinet's regularized double shuffle relations
\cite[Definition~3.2.1]{Racinet}; see also
\cite[Subsection~1.5]{EnriquezFurusho}.
\end{theorem}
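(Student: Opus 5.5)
Take $\mathbb E,\mathbb F$ built from the universal Kummer variations of Lemma~\ref{lem:universal-kummer-variations}: $\mathbb V_1=\mathbb U_m$, $\mathbb V_2=\mathbb U_n$. The starting point is the convolution defect identity \eqref{eq:convolution-defect-identity}. By Theorem~\ref{thm:general-defect-transport} its left side is $\kappa_F^{-1}R_{\mathbb E\ast\mathbb F}(g_I)\kappa_F$. By Lemma~\ref{lem:convolution-transport-coproduct} this equals $(R_{\mathbb E}\otimes R_{\mathbb F})(\Delta_*(g_I))$. The right side is $\bigl(R_{\mathbb E}(g_I)\otimes R_{\mathbb F}(g_I)\bigr)\mathcal B(a,b)$.

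The next step is to write $\mathcal B(a,b)$ as the action of an element of $W\widehat\otimes W$. For the $*$-extension, $R(y_1)=R(-e_1)=-cv$ in the frame \eqref{eq:dr-transport-action}. Since $vc=-N_1$, the operator $-cv$ on $\Phi$ corresponds to $N_1$ through $v$. So on vanishing cycles of each factor, identified with $E_i(-1)$ via logarithmic variation, $a$ and $b$ are the actions $R_{\mathbb E}(y_1)$ and $R_{\mathbb F}(y_1)$, up to conjugation by $v$. This must be checked carefully against the identification used for $\kappa_F$. Hence $\mathcal B(a,b)=(R_{\mathbb E}\otimes R_{\mathbb F})(\mathcal B(y_1\otimes1,1\otimes y_1))$. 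Here $\mathcal B$ is a commutative power series in two commuting variables, so ordering is harmless.

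Because $R$ is a right action, $R(x)R(y)=R(yx)$, so the identity becomes
\[
(R_{\mathbb E}\otimes R_{\mathbb F})\bigl(\Delta_*(g_I)\bigr)
=(R_{\mathbb E}\otimes R_{\mathbb F})\bigl(\mathcal B(y_1\otimes1,1\otimes y_1)\,(g_I\otimes g_I)\bigr).
\]
Applying both sides to $1\otimes1$ and letting $m,n$ grow, Lemma~\ref{lem:universal-kummer-variations} gives the identity in $W\widehat\otimes W$:
\[
\Delta_*(g_I)=\mathcal B(y_1\otimes1,1\otimes y_1)(g_I\otimes g_I).
\]

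It remains to strip the factor $U$ and substitute $A(-e_0,-e_1)$ for $A$. We have $g_I=U(e_1)A(-e_0,-e_1)_Y$, and $e_1=-y_1$. The harmonic coproduct makes $y_1$ primitive, so $\Delta_*(U(-y_1))$ is computed from the exponential-type series $U(u)=(e^{2\pi iu}-1)/(2\pi i u)$. This is not grouplike. However, $U(-y_1\otimes1-1\otimes y_1)$ versus $U(-y_1)\otimes U(-y_1)$ is exactly the ratio that combines with $\Gamma$ factors through the reflection formula, as in the proof of Proposition~\ref{prop:beta-factor}. I would therefore combine $U$ with $\mathcal B$: using $U(u)=e^{\pi iu}/(\Gamma(1+u)\Gamma(1-u))$, the $\mathcal B(x,y)$ and $U$ ratios should assemble into a sign change $\mathcal B(-x,-y)$ times grouplike exponentials that cancel.

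The last step is the automorphism $e_i\mapsto-e_i$. It sends $y_n\mapsto(-1)^n y_n$ and so respects $\Delta_*$, and it converts $A(-e_0,-e_1)_Y$ back into $A_Y$ and $\mathcal B(-y_1,\cdot)$ into $\mathcal B(y_1,\cdot)$, giving \eqref{eq:double-shuffle}.

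The main obstacle is this bookkeeping of $U$ and signs: matching $a,b$ with $\pm y_1$ actions, and verifying that the $U$ factors cancel against the $\Gamma$ reflection rather than leaving a stray factor. I would first test it in degree two using $\mathcal B=1-\zeta(2)ab$. The final remark identifying the result with Racinet's relations follows from the cited references, given the group-like property.
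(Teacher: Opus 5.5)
Your proposal is correct and follows the paper's proof step for step. The steps match: specializing to $\mathbb U_m,\mathbb U_n$, deriving $\Delta_*(g_I)=\mathcal B(y_1\otimes1,1\otimes y_1)(g_I\otimes g_I)$ from the right action and evaluation on $1\otimes1$, removing $U$ via the reflection formula, and applying the involution $e_i\mapsto-e_i$. The step you left tentative does close exactly: $U(-u-v)/(U(-u)U(-v))=\mathcal B(u,v)\mathcal B(-u,-v)$, with the exponential factors cancelling, which is the identity $U(-u-v)/(\mathcal B(u,v)U(-u)U(-v))=\mathcal B(-u,-v)$ used in the paper.
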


\begin{proof}
Apply \eqref{eq:convolution-defect-identity} to the
$j_{**!}$-extensions of $\mathbb U_m$ and $\mathbb U_n$.
Substitute Theorem~\ref{thm:general-defect-transport} and
Lemma~\ref{lem:convolution-transport-coproduct}.
Because $R$ is a right action, the order of multiplication
is reversed when it is written as a column operator.
Evaluation on $1\otimes1$ and the coefficient detection of
Lemma~\ref{lem:universal-kummer-variations} give, as $m,n$ vary,
the identity
\begin{equation}
\Delta_*(g_I)=
\mathcal B(y_1\otimes1,1\otimes y_1)(g_I\otimes g_I).
\label{eq:double-shuffle-with-variation}
\end{equation}
Put $F=A(-e_0,-e_1)_Y$, so that $g_I=U(e_1)F$.
The element $y_1=-e_1$ is primitive for $\Delta_*$.
Substitution in \eqref{eq:double-shuffle-with-variation}, using
\[
\frac{U(-u-v)}{\mathcal B(u,v)U(-u)U(-v)}
=\mathcal B(-u,-v),
\]
therefore gives
\[
\mathcal B(-y_1\otimes1,-1\otimes y_1)\,
\Delta_*(F)=F\otimes F.
\]
Finally, $e_i\mapsto-e_i$ sends $y_n$ to $(-1)^ny_n$ and
preserves \eqref{eq:harmonic-coproduct}.  Applying this
involution gives \eqref{eq:double-shuffle}.
\end{proof}

Since $y_1$ is primitive, the exponential formula for
$\mathcal B$ rewrites \eqref{eq:double-shuffle} as the
group-like property, for $\Delta_*$, of the series
\[
\exp\left(\sum_{n\ge2}
\frac{(-1)^{n-1}\zeta(n)}{n}\,y_1^n\right)A_Y.
\]
This is Racinet's corrected series
\cite[Definition~3.2.1]{Racinet}.

To express the beta factor as an abelian evaluation, first apply
the projection $A\mapsto A_Y$ and then abelianize.  Denote the
result by $(A_Y)^{\mathrm{ab}}(x,y)$.  In our coordinates,
one obtains
\[
(A_Y)^{\mathrm{ab}}(x,y)
=\frac{\Gamma(1-x)\Gamma(1-y)}{\Gamma(1-x-y)},
\qquad xy=yx.
\]
Indeed, apply \cite[Proposition~6 and Theorem~1]{MarkarianHodge}
to commuting residues, using $dz/(z-1)=-dz/(1-z)$, and cancel
the variation factor $U$.
This is also the abelianization formula of
\cite[Lemma~9.5(1), formula~(9.2.1)]{EnriquezFurusho}.
Hence the correction factor in \eqref{eq:double-shuffle} is
\begin{equation}
(A_Y)^{\mathrm{ab}}(-y_1\otimes1,-1\otimes y_1)
=\mathcal B(y_1\otimes1,1\otimes y_1).
\label{eq:double-shuffle-abelian-factor}
\end{equation}

\bibliographystyle{alpha}
\bibliography{hodge}

\end{document}